\newcommand{\s}{\vspace{0.3cm}}
\newtheorem{theo}{Theorem}
\newtheorem{coro}{Corollary}
\newtheorem{prop}{Proposition}
\newtheorem{lemm}{Lemma}
\theoremstyle{remark}
\newtheorem{rema}{\bf Remark}
\begin{document}  

\title[Quasiplatonic curves with symmetry group ${\mathbb Z}_{2}^{2} \rtimes {\mathbb Z}_{m}$]{Quasiplatonic curves with symmetry group ${\mathbb Z}_{2}^{2} \rtimes {\mathbb Z}_{m}$ \\are definable over ${\mathbb Q}$}

\author{Rub\'en A. Hidalgo}
\email{ruben.hidalgo@ufrontera.cl}
\address{Departamento de Matem\'atica y Estad\'{\i}stica, Universidad de La Frontera, Temuco, Chile.}

\author{Leslie Jim\'enez}
\email{leslie.jimenez@liu.se}
\address{Matematiska Institutionen, Link\"{o}pings Universitet, Link\"{o}ping, Sweden}

\author{Sa\'ul Quispe}
\email{saul.quispe@ufrontera.cl}
\address{Departamento de Matem\'atica y Estad\'{\i}stica, Universidad de La Frontera, Temuco, Chile.}

\author{Sebasti\'an Reyes-Carocca}
\email{sebastian.reyes@ufrontera.cl}
\address{Departamento de Matem\'atica y Estad\'{\i}stica, Universidad de La Frontera, Temuco, Chile.}

\thanks{Partially supported by Fondecyt Project 1150003, Postdoctoral Fondecyt Projects 3160002 and 3140050, Beca Chile Fellowship for Postdoctoral Research and Project Anillo ACT 1415 PIA-CONICYT}

\subjclass[2000]{14H37, 14H55, 14H25, 14H30, 30F10}
\keywords{Riemann surfaces, regular Belyi pairs, dessins d'enfants, Galois action, Jacobian variety}

\begin{abstract}
It is well known that every closed Riemann surface $S$ of genus $g \geq 2$, admitting a group $G$ of conformal automorphisms so that $S/G$ has triangular signature, can be defined over a finite extension of ${\mathbb Q}$. It is interesting to know, in terms of the algebraic structure of $G$, if $S$ can in fact be defined over ${\mathbb Q}$. This is the situation if $G$ is either abelian or isomorphic to $A \rtimes {\mathbb Z}_{2}$, where $A$ is an abelian group. On the other hand, as shown by Streit and Wolfart, if $G \cong {\mathbb Z}_{p} \rtimes {\mathbb Z}_{q}$ where $p,q>3$ are prime integers, then $S$ is not necessarily definable over ${\mathbb Q}$. In this paper, we observe that if $G\cong{\mathbb Z}_{2}^{2} \rtimes {\mathbb Z}_{m}$ with $m \geq 3$, then $S$ can be defined over ${\mathbb Q}$. Moreover, we describe explicit models for $S$, the corresponding groups of automorphisms and an isogenous decomposition of their Jacobian varieties as product of Jacobians of hyperelliptic Riemann surfaces.

\end{abstract}

\maketitle

\section{Introduction}
As it was defined by Grothendick in \cite{Grothendieck}, a dessin d'enfant of genus $g$ is a bipartite map on a closed orientable surface of genus $g$. The dessin d'enfant induces a unique, up to isomorphism, Riemann surface structure $S$ together with a non-constant meromorphic map $\beta:S \to \widehat{\mathbb C}$ whose branch values are contained in the set $\{\infty,0,1\}$; $S$ is called a Belyi curve, $\beta$ a Belyi map and $(S,\beta)$ a Belyi pair. Conversely, as a consequence of the uniformization theorem, each Belyi pair $(S,\beta)$ induces a dessin d'enfant (the bipartite map is provided by the preimage under $\beta$ of the closed interval $[0,1]$). 

A Belyi pair $(S,\beta)$ (and the corresponding dessin d'enfant) is called regular (in which case $S$ is called a quasiplatonic curve) if $\beta$ is a regular branched cover, that is, if there is a group of conformal automorphisms of $S$ being the deck group of $\beta$ (see, for instance, \cite{CJSW, Wolfart} for more details). It is well known that a finite group $G$ can be seen as the deck group of a Belyi pair (we say that the action has triangular signature) if and only if it can be generated by two elements \cite{Wolfart}. 

Two Belyi pairs $(S_{1},\beta_{1})$ and $(S_{2},\beta_{2})$ are called isomorphic if there is an isomorphism (a biholomorphic map) $h:S_{1} \to S_{2}$ such that $\beta_{1}=\beta_{2} \circ h$. Let us note that if $(S_{1},\beta_{1})$ and $(S_{2},\beta_{2})$ are regular Belyi pairs with respective deck groups $G_{1}$ and $G_{2}$, then the isomorphism $h$ conjugates $G_{1}$ onto $G_{2}$.

As a consequence of Belyi's theorem \cite{Belyi}, each Belyi pair can be defined over the field of algebraic numbers $\overline{\mathbb Q},$  that is, there is an isomorphic Belyi pair $(C,\beta)$, where $C$ (as an irreducible algebraic curve) and $\beta$ (as a rational map) are defined over $\overline{\mathbb Q}$. This fact permits to define an action of the absolute Galois group ${\rm Gal}(\overline{\mathbb Q}/{\mathbb Q})$ on Belyi pairs (or dessins d'enfants), as follows. Let $P_{1},\ldots,P_{r}$ be polynomials (with coefficients in $\overline{\mathbb Q}$) defining $C$, that is, $C$ is the set of common zeroes of these polynomials. Each $\sigma \in {\rm Gal}(\overline{\mathbb Q}/{\mathbb Q})$ provides new polynomials $P_{1}^{\sigma},\ldots,P_{r}^{\sigma}$ (where $P_{j}^{\sigma}$ is obtained from $P_{j}$ by applying $\sigma$ to its coefficients).
These new polynomials define a new irreducible algebraic curve $C^{\sigma}$. Similarly, we may apply $\sigma$ to the coefficients of $\beta$ and at the end we obtain a new Belyi pair $(C^{\sigma},\beta^{\sigma})$. It is well known that the absolute Galois group acts faithfully. Recently, Gonz\'alez-Diez and Jaikin-Zapirain \cite{Gabino-Andrei} proved that the absolute group acts faithfully on regular Bely pairs (even at the level of quasiplatonic curves).

The fixed points of the absolute group action are provided by those Belyi pairs which can be defined over ${\mathbb Q}$. It is a difficult task to decide if a given Belyi pair (or dessin d'enfant) can or cannot be definable over ${\mathbb Q}$. In the case of regular ones, some answers are known in terms of the corresponding deck group $G$. For instance, if 
either $G$ is an abelian group or a semidirect product $A \rtimes {\mathbb Z}_{2}$, where $A$ is abelian group, then the corresponding regular Belyi pair can be defined over ${\mathbb Q}$ (see \cite{H1} and \cite{H2}). On the other hand, in \cite{SW} it was noted that if $G= \langle a,b: a^p = b^q = 1, bab^{-1} = a^{n}\rangle \cong {\mathbb Z}_{p} \rtimes {\mathbb Z}_{q}$, where $p,q>3$ are prime integers and $n^{q} \equiv 1 \mod{p}$, then the regular Belyi pair is not necessarily definable over ${\mathbb Q}$.

In this paper we consider regular Belyi pairs $(S,\beta)$ with deck group $G \cong {\mathbb Z}_{2}^{2} \rtimes {\mathbb Z}_{m}$ where $m \geq 2$. As previously noted, for the case $m=2$ these are definable over ${\mathbb Q}$. So we only need to take care of the case $m \geq 3$. Also, as the abelian situation is also definable over ${\mathbb Q}$, we assume $G$ to be non-abelian. Theorem \ref{main} asserts that for these left cases such regular Belyi pairs can be defined over ${\mathbb Q}$. 

We are also able to construct explicit rational models of these pairs, their full groups of conformal automorphisms and isogenous decompositions of their Jacobian varieties as a product of Jacobian varieties of hyperelliptic Riemann surfaces.

\s
\noindent{\bf{Acknowledgments.}} The authors are very grateful to Professor Anita Rojas for sharing her MAGMA routines with us; they were very useful for the calculations in the last section of this paper. 


\section{Main results}
We consider regular Belyi pairs $(S,G)$, where $G\cong {\mathbb Z}_{2}^{2} \rtimes {\mathbb Z}_{m}$,  $m \geq 3$, and $G$ non-abelian. 

\subsection{Signatures}
Before we proceed to our main result, we first describe the possible signatures for the quotient orbifold $S/G$.

\begin{prop}
Let $(S,\beta)$ be a regular Belyi pair of genus $g \geq 2$ admitting $G  \cong {\mathbb Z}_{2}^{2} \rtimes {\mathbb Z}_{m}$, where $m \geq 3$ and $G$ non-abelian, as its deck group. Then the possible signatures for the quotient $S/G$ are: 
\begin{enumerate}
\item $(0;2,2q,4q)$ if $m=2q$ and $q \geq 3$ is odd, or 
\item $(0;2,m,m)$ if $m \geq 6$ is either divisible by $3$ or by $4$.
\end{enumerate}
\end{prop}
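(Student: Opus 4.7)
The plan is to classify the non-abelian semidirect products $G \cong \mathbb{Z}_2^2 \rtimes \mathbb{Z}_m$ according to their defining action, and then use Riemann--Hurwitz together with the abelianization of $G$ to pin down the admissible triangular signatures $(0;a,b,c)$ of $S/G$.

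First I would reduce to cases on the action $\phi\colon \mathbb{Z}_m\to\mathrm{Aut}(\mathbb{Z}_2^2)\cong S_3$. Since $G$ is non-abelian $\phi$ is non-trivial, and since $\mathbb{Z}_m$ is abelian the image of $\phi$ is a non-trivial abelian subgroup of $S_3$, hence either $\mathbb{Z}_2$ generated by a transposition (which forces $m=2q$) or $\mathbb{Z}_3$ generated by a $3$-cycle (which forces $3\mid m$). The two cases can overlap when $6\mid m$ and then produce non-isomorphic groups. In each case I would compute the abelianization from $[c,a]=\phi_c(a)a^{-1}$: one gets $[G,G]=\langle ab\rangle$ and $G^{\mathrm{ab}}=\mathbb{Z}_2\times\mathbb{Z}_m$ in the $\mathbb{Z}_2$-case, and $[G,G]=\mathbb{Z}_2^2$ and $G^{\mathrm{ab}}=\mathbb{Z}_m$ in the $\mathbb{Z}_3$-case.

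Next, I would tabulate the orders of the elements $(v,c^k)\in G$ by expanding
\[
(v,c^k)^n=\bigl(v+\phi_{c^k}(v)+\phi_{c^{2k}}(v)+\cdots+\phi_{c^{(n-1)k}}(v),\, c^{nk}\bigr)
\]
and locating when both coordinates vanish simultaneously. In the $\mathbb{Z}_2$-case, for $k$ odd the vector $v+\phi_{c^k}(v)$ equals $0$ if $v$ lies in the fixed coset $\{0,ab\}$ and equals $ab$ otherwise, yielding element orders $2q/\gcd(q,k)$ and $4q/\gcd(q,k)$ respectively; the maximum $4q=2m$ is attained exactly when $q/\gcd(q,k)$ is odd, in particular when $q$ itself is odd. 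In the $\mathbb{Z}_3$-case, for $k\not\equiv 0\pmod 3$ the sum $v+\phi_{c^k}(v)+\phi_{c^{2k}}(v)$ vanishes identically (it equals $0$ or $a+b+ab=0$), so $(v,c^k)$ has order equal to that of $c^k$ in $\mathbb{Z}_m$.

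Finally, I would use these data to restrict $(a,b,c)$. A generating triple $(x,y,z)$ with $xyz=1$ and orders $(a,b,c)$ projects to a generating triple of $G^{\mathrm{ab}}$ that sums to zero there; combined with hyperbolicity $1/a+1/b+1/c<1$ (forced by $g\geq 2$) and the order table above, this should eliminate all candidate triples except those listed in (1) and (2). For the surviving signatures I would check realizability by exhibiting explicit generators: in the $\mathbb{Z}_2$-case, $x=(a,1)$ and $y=(0,c)$ have orders $2$ and $m=2q$ while $xy=(a,c)$ has order $2m=4q$, and the conjugation $yxy^{-1}=(b,1)$ shows $\langle x,y\rangle=G$; in the $\mathbb{Z}_3$-case, a suitable order-$2$ element $x$ together with $y=(0,c)$ produces a $(2,m,m)$-generating pair. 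The main obstacle is the elimination step: many triples $(a,b,c)$ satisfy the local order and abelianization constraints, and ruling them out requires showing that all candidate generator pairs sit inside a proper subgroup of $G$ (typically a cyclic subgroup generated by the higher-order element), which forces a careful case analysis over the divisibility patterns of $m$ and over the two distinct group structures available when $6\mid m$.
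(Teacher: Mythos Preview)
Your opening coincides with the paper's: both of you classify the non-trivial images $\phi\colon\mathbb{Z}_m\to\mathrm{Aut}(\mathbb{Z}_2^2)\cong S_3$ and arrive at the transposition case (forcing $2\mid m$) and the $3$-cycle case (forcing $3\mid m$). After this the paper takes a much shorter path than you propose. Rather than tabulating all element orders and invoking the abelianization to filter candidate triples, the paper simply observes that in each case $G$ is already two-generated by specific elements---$\{b,t\}$ in the transposition case, $\{a,t\}$ in the $3$-cycle case---and computes the order of their product directly: $bt$ has order $m$ when $4\mid m$ and $2m$ when $m=2q$ with $q$ odd, while $at$ always has order $m$. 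The signature $(2,m,|bt|)$ (resp.\ $(2,m,|at|)$) is then read off immediately, with no separate elimination step. Your approach via abelianization and a full element-order table is more systematic and would actually justify the claim that \emph{no other} triangular signature can occur, something the paper leaves implicit; so your route buys a more self-contained argument at the cost of the case analysis you flag at the end, while the paper's route buys brevity by going straight to the natural generating pair. One small correction to your realizability check: the product $xy=(a,c)$ has order $2m=4q$ only when $q$ is odd; when $4\mid m$ its order is $m$, which is exactly what produces the $(2,m,m)$ signature of part~(2) in the transposition case, and you should state that branch separately.
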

\begin{proof}
It is not difficult to see that we only have two cases to consider (up to automorphisms). 
\begin{enumerate}
\item[(i)] $G=\langle a,b,t: a^{2}=b^{2}=(ab)^{2}=t^{m}=1, tat^{-1}=a, tbt^{-1}=ab\rangle$,
\item[(ii)] $G=\langle a,b,t: a^{2}=b^{2}=(ab)^{2}=t^{m}=1, tat^{-1}=b, tbt^{-1}=ab\rangle$.
\end{enumerate}

In case (i), as $tat^{-1}=a, tbt^{-1}=ab$ and $tabt^{-1}=b$, the integer $m$ must be even. Moreover, in this situation
$$G=\langle b,t: b^{2}=t^{m}=[t,b]^{2}=1, (tb)^{2}=(bt)^{2}\rangle,$$
where $[x,y]=xyx^{-1}y^{-1}$.
So it can be seen that $bt$ has order $m$ if $m$ is divisible by $4$ and order $2m$ otherwise; in particular, $S/G$ has  signature $(0;2,m,m)$ if $m$ is divisible by $4$ and $(0;2,m,2m)$ if $m=2q$ with $q \ge 3$ odd.

In case (ii), as $tat^{-1}=b$, $tbt^{-1}=ab$ and $tabt^{-1}=a$, the integer $m$ must be divisible by $3$. Moreover, in this situation
$$G=\langle a,t: a^{2}=t^{m}=[a,t]^{2}=1, t^{3}=(at)^{3}\rangle,$$
so it can be seen that $at$ has order $m$ and that $S/G$ has signature $(0;2,m,m)$.
\end{proof}

%

\subsection{Main theorem}
Our main result provides the explicit algebraic descriptions of $S$, its full group of conformal automorphisms ${\rm Aut}(S)$ and an isogenous  decomposition of its Jacobian variety $JS$ as product of  Jacobian varieties of hyperelliptic curves.

\s
\begin{theo}\label{main} Let $m \ge 3$ and $\omega_m=\mbox{exp}(2 \pi i /m).$ Let $(S,\beta)$ be a regular Belyi pair of genus $g \geq 2$ admitting a non-abelian semidirect product $G =\langle a,b\rangle \rtimes \langle t \rangle \cong {\mathbb Z}_{2}^{2} \rtimes {\mathbb Z}_{m}$ as its deck group. Then $(S,\beta)$ is definable over ${\mathbb Q}$ and the following holds.

\begin{enumerate}
\item \label{caso1} If $S/G$ has signature $(0;2,2q,4q)$, where $m=2q$ and $q \geq 3$ is odd, then 
$g=2(q-1)$ and the Belyi pair $(S,\beta)$ is unique up to isomorphisms. Moreover,
\begin{enumerate}
\item $S$ can be described by the algebraic curve
\begin{equation*}  \left\{\begin{array}{l}
y^{2}=x^{q}-1\\
z^{2}=x^{m}-1
\end{array}
\right\} \subset {\mathbb C}^{3},
\end{equation*}
the Belyi map corresponds to $\beta(x,y,z)=x^m$ and
$$a(x,y,z)=(x,-y,z), \; b(x,y,z)=(x,y,-z), \; t(x,y,z)=\left(\omega_{m} x, (iz)/y, z  \right).$$

\item The group $G$ is the full group of conformal automorphisms of $S$. 

\item The Jacobian variety $JS$ is isogenous to $(JS_{b})^{4}$, where 
$$S_{b}: y^{2}=x^{q}-1.$$
\end{enumerate}

\s

\item \label{caso2} If $S/G$ has signature $(0;2,m,m)$, where $m \geq 6$ is either divisible by $3$ or by $4$, then 
$g=m-3$ and the following holds. 

\begin{enumerate}
\item \label{caso2a} If $m=3l$, $l \geq 2$, is not divisible by $12$, then $(S,\beta)$ is unique up to isomorphisms. Moreover, 
\begin{enumerate}
\item $S$ is described by the algebraic curve
$$ \left\{\begin{array}{l}
y^{2}=(x^{l}-1)(x^{l}-\omega_{3}^{2})=x^{2l}+\omega_{3} x^{l}+\omega_{3}^{2}\\
z^{2}=(x^{l}-\omega_{3})(x^{l}-\omega_{3}^{2})=x^{2l}+x^{l}+1
\end{array}
\right\} \subset {\mathbb C}^{3},
$$
the Belyi map corresponds to $\beta(x,y,z)=x^m$ and 
$$a(x,y,z)=(x,-y,z), \; b(x,y,z)=(x,y,-z), \;
t(x,y,z)=\left(\omega_{m} x, -\omega_{3} z, \frac{\omega_{3} yz}{x^{l}-\omega_{3}^{2}}  \right).$$

\item The Riemann surface $S$ has the extra automorphism
$$u(x,y,z)=\left(\frac{1}{x}, \frac{\omega_{3}yz}{x^{l}(x^{l}-\omega_{3}^{2})}, \frac{z}{x^{l}} \right).$$

\item The group of conformal automorphisms ${\rm Aut}(S)$ is generated by $a,b,t$ and $u$. In fact,
$${\rm Aut}(S)=\langle t,u: u^{4}=t^{m}=(ut)^{2}=1, t^{3}=(u^{2}t)^{3}, ([t^{-1},u] u^{-1})^{2}=1\rangle,$$
has order $8m$ and $S/{\rm Aut}(S)$ has signature $(0;2,4,m)$. 

\item The Jacobian variety $JS$  is isogenous to $(JS_{a})^{3}$, where 
$$S_{a}: z^{2}=(x^{l}-\omega_{3})(x^{l}-\omega_{3}^{2})=x^{2l}+x^{l}+1.$$

In fact,
$$JS \sim (JS_{a,1})^{3} \times (JS_{a,2})^{3},$$
where
$$S_{a,1}: w_{1}^{2}=(1-v_{1})^{l}+2\sum_{j=0}^{l}\binom{2l}{2j}v_{1}^{j},$$
and 
$$S_{a,2}: w_{2}^{2}=v_{2}\left((1-v_{2})^{l}+2\sum_{j=0}^{l}\binom{2l}{2j}v_{2}^{j}\right).$$
\end{enumerate}

\item \label{caso2b} If $m=4l$, $l \geq 2$, is not divisible by $12$, then $(S,\beta)$ is unique up to isomorphisms. Moreover, 
\begin{enumerate}
\item $S$ is described by the algebraic curve
\begin{equation*}  \left\{\begin{array}{l}
y^{2}=x^{2l}-1\\
z^{2}=x^{m}-1
\end{array}
\right\} \subset {\mathbb C}^{3},
\end{equation*}the Belyi map corresponds to 
$\beta(x,y,z)=x^m$
and
$$a(x,y,z)=(x,-y,z), \; b(x,y,z)=(x,y,-z), \;
t(x,y,z)=\left(\omega_{m} x, \frac{iz}{y}, z  \right).$$

\item The Riemann surface $S$ has the extra automorphism
$$u(x,y,z)=\left(\frac{1}{x}, \frac{i y}{x^{l}}, \frac{iz}{x^{2l}} \right).$$

\item The group of conformal automorphisms ${\rm Aut}(S)$ is generated by $a,b,t$ and $u$. In fact, 
$${\rm Aut}(S)=\langle t,u: u^{4}=t^{m}=(tu)^{2}=[u^{2},t]^{2}=[u^{2},tut^{-1}]=1,(tu^{2})^{2}=(u^{2}t)^{2}\rangle$$
has order $8m$ and $S/{\rm Aut}(S)$ has signature $(0;2,4,m)$. 

\item The Jacobian variety $JS$ is isogenous to the product of $JS_{a,2} \times (JS_{b})^{3}$, where 
$$S_{a,2}: w_{2}^{2}=v_{2}(v_{2}^{2l}-1), \quad S_{b}: y^{2}=x^{2l}-1.$$

\end{enumerate}

\item \label{caso2c} If $m$ is divisible by $12$, then there are exactly two non-isomorphic pairs $(S,\beta)$; they are algebraically represented as in \eqref{caso2a} and \eqref{caso2b} above.

\item \label{caso4} In any of the above cases \eqref{caso2a}, \eqref{caso2b} and \eqref{caso2c}, the subgroup $\langle a,b \rangle$ is the unique subgroup (so a normal subgroup) of ${\rm Aut}(S)$ isomorphic to ${\mathbb Z}_{2}^{2}$, ${\rm Aut}(S)/\langle a,b \rangle \cong {\mathbb D}_{m}$,  and $S/\langle a,b \rangle$ is the Riemann sphere with exactly $m$ cone points, each one of order two.

\end{enumerate}
\end{enumerate}
\end{theo}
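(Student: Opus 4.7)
\textbf{Classification of the pairs.} I would attack all parts by two steps: (i) classify surface-kernel epimorphisms $\theta:\Delta\twoheadrightarrow G$ from the relevant triangle group, up to $\operatorname{Aut}(G)\times\operatorname{Aut}(\Delta)$; (ii) construct $S$ explicitly using the normal Klein four-subgroup $N=\langle a,b\rangle$. Riemann--Hurwitz applied to $S\to S/G$ with the signatures of the preceding proposition gives $g=2(q-1)$ in \eqref{caso1} and $g=m-3$ in \eqref{caso2}. For the count, every involution of $G$ lies in $N$ (a direct check from the two presentations), so the order-$2$ canonical generator is forced into $N$; enumerating admissible ordered triples with the stated orders and product $1$ then yields uniqueness in \eqref{caso1}, \eqref{caso2a} and \eqref{caso2b}, and exactly two orbits in \eqref{caso2c}, precisely in the range $12\mid m$ where both presentations of $G$ realize $(0;2,m,m)$.

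\textbf{Explicit model, definability and $\operatorname{Aut}(S)$.} Normality of $N$ together with $G/N\cong\mathbb Z_m$ acting with genus-zero quotient forces $S/N\cong\widehat{\mathbb C}$. Normalising the descended action of $t$ to $x\mapsto\omega_m x$ with fixed points $0,\infty$, the epimorphism $\theta$ specifies which of the three nontrivial involutions of $N$ fixes each fibre of $S/N\to S/G$, and the two intermediate hyperelliptic covers $S/\langle a\rangle, S/\langle b\rangle\to S/N$ produce exactly the pair of equations of the theorem; the requirement $\beta(x,y,z)=x^m$ then forces the listed formulas for $a,b,t$. These equations are already $\mathbb Q$-rational in \eqref{caso1} and \eqref{caso2b}; in \eqref{caso2a} they live a priori over $\mathbb Q(\omega_3)$, but uniqueness of the Belyi pair makes every Galois conjugate isomorphic to $(S,\beta)$, so the field of moduli is $\mathbb Q$, and a Weil-descent argument in the spirit of \cite{H1,H2} (with the large $\operatorname{Aut}(S)$ killing the cocycle obstruction) produces a model over $\mathbb Q$. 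The extra symmetry $u$ is obtained by combining $x\mapsto 1/x$ with the swap of the two hyperelliptic layers; $\langle t,u\rangle$ has order $8m$ and a Riemann--Hurwitz count shows $S/\langle t,u\rangle$ has signature $(0;2,4,m)$, which is not compatible with any strictly larger action on a surface of genus $m-3$, so $\operatorname{Aut}(S)=\langle t,u\rangle$, and the stated presentations are verified by direct substitution into the explicit formulas.

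\textbf{Jacobian decomposition and part \eqref{caso4}.} For the isogeny decomposition of $JS$ I would apply the group-algebra decomposition of Lange--Recillas (equivalently Kani--Rosen): decompose $\mathbb Q[\operatorname{Aut}(S)]$ into simple components, compute their multiplicities in $H^0(S,\Omega^1_S)$ via the character of the representation on holomorphic differentials, and match each isotypical factor with (a power of) the Jacobian of one of the hyperelliptic subcovers $S_b$, $S_a$, $S_{a,1}$, $S_{a,2}$ listed in the theorem; the correctness of each matching amounts to identifying the stabiliser subgroup whose fixed locus cuts out the corresponding quotient curve. Part \eqref{caso4} is a short group-theoretic check inside $\operatorname{Aut}(S)$: all involutions of $G$ lie in $N$, and the extra involutions contributed by $u$ fail to generate any further Klein four subgroup, so $N$ is the unique $\mathbb Z_2^2$; the images of $t$ and $u$ in $\operatorname{Aut}(S)/N$ satisfy dihedral relations, giving $\mathbb D_m$; and $S/N\cong\widehat{\mathbb C}$ with exactly $m$ cone points of order $2$ is a direct consequence of the branch-locus analysis already performed. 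The principal obstacle is case \eqref{caso2a}: making the curve actually definable over $\mathbb Q$ despite the genuine presence of $\omega_3$ in the equations requires the Weil-cocycle argument above, and it is precisely this cyclotomic phenomenon, together with the coexistence of both presentations of $G$ exactly when $12\mid m$, that produces the two non-isomorphic pairs in \eqref{caso2c}.
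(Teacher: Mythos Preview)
Your overall architecture---classify surface-kernel epimorphisms, build the model from the Klein four quotient, then decompose the Jacobian---is sound and in fact differs from the paper's route, which instead realises $S$ as a quotient $C/H$ of a generalized Fermat curve $C$ (of type $(2,m-1)$) by a subgroup $H\cong\mathbb Z_2^{m-3}$ of the deck group $F\cong\mathbb Z_2^{m-1}$, and then classifies the admissible $H$'s normalised by the lift of $t$ (their Lemma~1). Your epimorphism count is a legitimate alternative, and the Jacobian part via the group-algebra decomposition is also fine; the paper does it directly with Kani--Rosen on the three involutions $\langle a\rangle,\langle b\rangle,\langle ab\rangle$ and a second application on the extra involution of the hyperelliptic quotient.

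There is, however, a concrete error and a genuine gap. The claim that ``every involution of $G$ lies in $N$'' is false whenever $m$ is even: in presentation~(i) with $m=2q$ the elements $t^{q}$ and $at^{q}$ are involutions outside $N$, and in presentation~(ii) with $m$ even one has $t^{m/2}$ (and possibly $nt^{m/2}$) outside $N$. So the order-$2$ generator is \emph{not} forced into $N$, and your epimorphism count needs to be redone accordingly; uniqueness can still be recovered, but the argument is not the one-line check you suggest.

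The more serious gap is the determination of $\operatorname{Aut}(S)$. Saying that $(0;2,4,m)$ ``is not compatible with any strictly larger action'' is not a proof: one must invoke Singerman's list of non-maximal triangle signatures to see that $(0;2,4,m)$ is maximal for $m\ge 6$, $m\ne 8$, and then handle $m=8$ separately. More delicately, in case~\eqref{caso1} you give no argument at all for $\operatorname{Aut}(S)=G$; here $S/G$ has signature $(0;2,2q,4q)$, which \emph{does} embed in the triangle group of signature $(0;2,3,4q)$, and the paper rules out the corresponding extension by an explicit Fuchsian-group computation showing that the torsion-free kernel uniformising $S$ fails to be normal in the larger triangle group. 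Without this (or an equivalent argument) part~(1)(b) is unproved.
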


\section{Some remarks concerning Theorem \ref{main}}\label{observacion}

\subsection{Equations over ${\mathbb Q}$}
The provided curves in Theorem \ref{main}, with the only exception of the case \eqref{caso2a}, are defined over ${\mathbb Q}$. In the left case the provided curve is defined over a degree two extension of $\mathbb{Q};$ namely ${\mathbb Q}(\omega_{3}).$ However, the uniqueness property asserts that it is definable over ${\mathbb Q}$. In this case, we may follow the computational method presented in \cite{HR} to find a rational model (this is done in Section \ref{modelito}).
 
\subsection{Projective models} Projective models associated to the affine ones provided in Theorem \ref{main}, are the following:

\begin{enumerate}

\item[(1)] In case $(1),$ given by the curve
$$\hat{C}:\; \left\{\begin{array}{lll}
y^{2}w^{q-2} & = & x^{q}-w^{q}\\
z^{2}w^{2q-2} & = & x^{2q}-w^{2q}
\end{array}
\right\}
\subset {\mathbb P}^{3}
$$with Belyi map $\beta([x:y:z:w])=(x/w)^{m}$. Moreover, 

\begin{enumerate}
\item[(a)]  If $\beta(p) \in {\mathbb C}$ is not a $2q$-root of unity, then $\hat{C}$ is smooth at $p$.

\item[(b)] If $\beta(p) \in {\mathbb C}$ is a $q$-root of unity, then $p$ is a node of $\hat{C}$ (that is, locally looks like two cones glued at a common vertex).

\item[(c)] If $\beta(p) \in {\mathbb C}$ is a $2q$-root of unity but not a $q$-root of unity, then $p$ is a singular point of $\hat{C}$ of cone type (i.e., locally looks as a topological disc).

\item[(d)] Above $\infty$ we have two points on $\hat{C}$;  each one being a singular point of cone type (similarly as above). 

\end{enumerate}

In particular, a smooth model of $\hat{C}$ is obtained by separating at its $q$ nodes by a blowing-up process.

\item[(2a)] In case $(2a),$ given by the curve 
$$\hat{C}:\; \left\{\begin{array}{l}
y^{2}w^{2l-2}=(x^{l}-w^{l})(x^{l}-\omega_{3}^{2}w^{l})=x^{2l}+\omega_{3} x^{l}w^{l}+\omega_{3}^{2}w^{2l}\\
z^{2}w^{2l-2}=(x^{l}-\omega_{3}w^{l})(x^{l}-\omega_{3}^{2}w^{l})=x^{2l}+x^{l}w^{l}+w^{2l}
\end{array}
\right\} \subset {\mathbb P}^{3}
$$
and the Belyi map corresponds to $\beta([x:y:z:w])=(x/w)^{m}$. Moreover,

\begin{enumerate}
\item Observe that $\hat{C}$ is reducible as it contains a projective line $L$ at infinity. The surface $S$ corresponds to the union of $C$ with certain $4$ points of $L$.

\item If $\beta(p) \in {\mathbb C}$ is not a $3l$-root of unity, then $\hat{C}$ is smooth at $p$.

\item If $\beta(p) \in {\mathbb C}$ is a $l$-root of unity or a $l$-root of $\omega_{3}^{2}$, then $p$ is a node of $\hat{C}$ (that is, locally looks like two cones glued at a common vertex).

\item If $\beta(p) \in {\mathbb C}$ is a $l$-root of unity or a $l$-root of $\omega_{3}$, then $p$ is a singular point of $\hat{C}$ of cone type (i.e., locally  looks as 
a topological disc).

\end{enumerate}

The difference of the two equations of $C$ permits to obtain $x^{l}$:
$$(\omega_{3}-1)x^{l}-y^{2}w^{l-2}+z^{2}w^{l-2}-(2+\omega_{3})w^{l}=0$$

Now, using this value of $x^{l}$ in the first equation of $C$ permits to obtain $x^{2l}$:
$$(1-\omega_{3})x^{2l}-y^{2}w^{2l-2}+\omega_{3}z^{2}w^{2l-2}-\omega_{3}(1-\omega_{3})w^{2l}=0$$
and this first equation is the same as
$$y^{4}+z^{4}-2y^{2}z^{2}-3z^{2}w^{2}+3(1+\omega_{3})y^{2}w^{2}=0$$

In this way, the above affine curve can be written as follows:
$$C:\; \left\{\begin{array}{l}
y^{4}+z^{4}-2y^{2}z^{2}-3z^{2}+3(1+\omega_{3})y^{2}=0\\
(1-\omega_{3})x^{2l}-y^{2}+\omega_{3}z^{2}-\omega_{3}(1-\omega_{3})=0\\
(\omega_{3}-1)x^{l}-y^{2}+z^{2}-(2+\omega_{3})=0
\end{array}
\right\} \subset {\mathbb C}^{3}
$$
and its projectivization as
$$\hat{C}:\; \left\{\begin{array}{l}
y^{4}+z^{4}-2y^{2}z^{2}-3z^{2}w^{2}+3(1+r)y^{2}w^{2}=0\\
(1-\omega_{3})x^{2l}-y^{2}w^{2l-2}+\omega_{3}z^{2}w^{2l-2}-\omega_{3}(1-\omega_{3})w^{2l}=0\\
(\omega_{3}-1)x^{l}-y^{2}w^{l-2}+z^{2}w^{l-2}-(2+\omega_{3})w^{l}=0
\end{array}
\right\} \subset {\mathbb P}^{3}
$$
which is now irreducible. This has two singular points at infinity, given by the two points $[0:1:\pm 1:0]$, each one, after desingularization, produces two smooth points (i.e., we obtain the four points at infinity).

\item[(2b)] In case $(2b),$ given by the curve
$$\hat{C}:\; \left\{\begin{array}{lll}
y^{2}w^{2l-2} & = & x^{2l}-w^{2l}\\
z^{2}w^{4l-2} & = & x^{4l}-w^{4l}
\end{array}
\right\}
\subset {\mathbb P}^{3}
$$
with Belyi map $\beta([x:y:z:w])=(x/w)^{m}$. Moreover,

\begin{enumerate}
\item[(a)] If $\beta(p) \in {\mathbb C}$ is not a $4l$-root of unity, then $\hat{C}$ is smooth at $p$.

\item[(b)] If $\beta(p) \in {\mathbb C}$ is a $2l$-root of unity, then $p$ is a node of $\hat{C}$ (that is, locally looks like two cones glued at a common vertex).

\item[(c)] If $\beta(p) \in {\mathbb C}$ is a $4l$-root of unity but not a $2l$-root of unity, then $p$ is a singular point of $\hat{C}$ of cone type (i.e., locally looks like a topological disc).

\item[(d)] Above $\infty$ we have four points on $\hat{C}$;  each one being a smooth point. 
\end{enumerate}
\end{enumerate}

\subsection{Fiber product} 
In any of the cases in Theorem \ref{main}, the surface $S$ is just the fiber product of $(S_{a},\pi_{a}(x,z)=x$) and $(S_{b},\pi_{b}(x,y)=x)$.

\subsection{Hyperelliptic cases}
If we take $m=6$ in case (\ref{caso2a}), then $\langle t,u \rangle \cong {\mathbb Z}_{2} \times {\mathfrak S}_{4}$, where the ${\mathbb Z}_{2}$ component is generated by an element of order two with exactly $8$ fixed points (that is, the hyperelliptic involution). It follows that $S$ is the only hyperelliptic Riemann surface of genus three admitting as group of conformal automorphisms ${\mathfrak S}_{4}$. In fact, this is the only hyperelliptic situation appearing in Theorem \ref{main} (see Proposition \ref{hipereliptico} below).

\begin{prop}\label{hipereliptico}
The only hyperelliptic situation in Theorem \ref{main} is for $m=6$ in case (\ref{caso2a}).
\end{prop}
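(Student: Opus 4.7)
The plan is to use the classical fact that a Riemann surface $S$ of genus $g\geq 2$ is hyperelliptic if and only if $\mathrm{Aut}(S)$ contains a central involution $\iota$ with exactly $2g+2$ fixed points. Since the $m=6$ instance of case~(\ref{caso2a}) has already been identified as hyperelliptic in the discussion preceding this proposition, the task reduces to showing that in no other case of Theorem~\ref{main} does $\mathrm{Aut}(S)$ admit such an involution. My approach is to compute $Z(\mathrm{Aut}(S))$ case by case from the presentations given in the theorem and count the fixed points of each central involution using the explicit rational formulas for $a,b,t,u$.

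In case~(\ref{caso1}), where $\mathrm{Aut}(S)=G$ is given by presentation~(i) with $m=2q$, the relations $tat^{-1}=a$ and $t^2bt^{-2}=b$ yield $Z(G)=\langle a\rangle\times\langle t^2\rangle$, whose only involution is $a$ since $|\langle t^2\rangle|=q$ is odd. I would exhibit a generating triple of signature $(0;2,2q,4q)$ whose element of order $4q$ is $bt$, and use the identity $(bt)^2=at^2$ (derived from $bt=atb$) to conclude $(bt)^{2q}=a$. Hence $a$ is conjugate only to the $2q$-th power of the third generator, and the standard fixed-point formula for quasiplatonic actions gives $|\mathrm{Fix}(a)|=|G|/\mathrm{ord}(bt)=2$. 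Since $2<4q-2=2g+2$ for $q\geq 3$, $S$ is not hyperelliptic.

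For cases~(\ref{caso2a})--(\ref{caso2c}) the action of $t$ on $\{a,b,ab\}$ determines $Z(G)$: in case~(ii) it is a $3$-cycle, so $Z(G)=\langle t^3\rangle$; in case~(i) it fixes $a$ and transposes $b,ab$, so $Z(G)=\langle a\rangle\times\langle t^2\rangle$. From the explicit formulas one derives $t^3(x,y,z)=(\omega_l x,y,z)$ in case~(\ref{caso2a}) (with $m=3l$) and $t^2(x,y,z)=(\omega_m^2 x,y,z)$ in case~(\ref{caso2b}); comparing $u\,t^{3k}$ and $t^{3k}u$ coordinatewise, the commutation $[u,t^{3k}]=1$ holds iff $\omega_l^{2k}=1$. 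The only nontrivial central involution arising from $\langle t\rangle$ is therefore $t^{m/2}$, existing precisely when $l$ is even; it acts by $(x,y,z)\mapsto(-x,y,z)$ and fixes the $4$ affine points at $x=0$ together with the $4$ smooth points above $\infty$ (the latter because $l$ even implies $(-x)^l=x^l$, so the limits $y/x^l,z/x^l$ are preserved). Hence $|\mathrm{Fix}(t^{m/2})|=8$, and the equality $8=2g+2=2m-4$ forces $m=6$, which is precisely case~(\ref{caso2a}) with $l=2$.

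In case~(\ref{caso2b}) one must additionally exclude $a$ and $at^{m/2}$. A local computation at each of the $2l$ affine nodes, where $y^2$ and $z^2$ both vanish to first order in $x-x_0$ and the two analytic branches are distinguished by the sign of $z/y$, shows that $a$ swaps the branches, so after desingularization $a$ has no fixed points there; at infinity, $a$ permutes the $4$ smooth points in pairs. The same analysis applies to $at^{m/2}$. Case~(\ref{caso2c}) is automatically excluded, since its two models coincide with cases~(\ref{caso2a}) and~(\ref{caso2b}) for $m\geq 12$, already ruled out. The main technical obstacle is the fixed-point count on the singular affine models: one must carefully determine, at each node, whether a given central involution preserves or exchanges the two local branches, and the behavior at infinity depends on the parity of $l$ through $x\mapsto -x$. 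Once these local pictures are in place, the proposition follows from a direct case-by-case verification.
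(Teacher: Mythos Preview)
Your approach is essentially correct and genuinely different from the paper's, but there is one gap worth flagging. In cases~(\ref{caso2a})--(\ref{caso2c}) you compute $Z(G)$ and then intersect with the centraliser of $u$; this yields $Z(\mathrm{Aut}(S))\cap G$, but you never argue that $Z(\mathrm{Aut}(S))\subseteq G$, so a priori a central involution could lie in the nontrivial coset $Gu$. The fix is short: by part~(\ref{caso4}) of the theorem, $\mathrm{Aut}(S)/\langle a,b\rangle\cong \mathbb{D}_m$, and for $m\geq 3$ the centre of $\mathbb{D}_m$ is contained in the rotation subgroup, which is the image of $G$; hence $Z(\mathrm{Aut}(S))\subseteq G$ and your computation is complete. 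With that line added, your case analysis goes through, though the local branch analysis at the nodes (deciding whether $a$ swaps or fixes the two sheets) and at infinity is delicate and only sketched.

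The paper's argument is shorter and avoids all of this machinery. Rather than enumerating central involutions, it works uniformly with the Klein four-group $K=\langle a,b\rangle$ and the degree-four cover $P\colon S\to S/K\cong\widehat{\mathbb C}$. The key step is that $\iota\notin K$ (since each of $a,b,ab$ has fewer than $2g+2$ fixed points), so $\iota$ descends to a M\"obius involution $\tau$; since stabilisers of points on $S$ are cyclic, $\iota$ shares no fixed point with any element of $K$, hence $\tau$ fixes none of the $m$ branch values. The $2g+2$ fixed points of $\iota$ therefore lie in the two unramified fibres over the fixed points of $\tau$, giving $2g+2\leq 8$, so $g\leq 3$, and the genus formulas in the theorem force $m=6$ in case~(\ref{caso2a}). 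Your method buys explicit identification of the hyperelliptic involution as $t^{3}$ and exercises the presentations of $\mathrm{Aut}(S)$, but it leans on the full statement of Theorem~\ref{main} (equations, automorphism formulas, projective models); the paper's argument uses only the $\mathbb{Z}_2^2$-quotient structure and works in one stroke for all cases.
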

\begin{proof}
Let us consider the group $K=\langle a, b \rangle \cong {\mathbb Z}_{2}^{2}$ as above. By Theorem \ref{main}, we may identify $S/K$ with the Riemann sphere $\widehat{\mathbb C}$. Let us consider a regular branched covering $P:S \to \widehat{\mathbb C}$ with $K$ as its deck group. As the number of fixed points of $a$, $b$ and $ab$ is less than $2g+2$, it follows that $\iota \notin K$, in particular, there is an order two M\"obius transformation $\tau$ so that $P \circ \iota=\tau \circ P$. As $\iota$ cannot have a common fixed point with $a$, $b$ and $a b$, it follows that $\tau$ cannot fix any of the branch values of $P$. It follows that (as the fixed points of $\iota$ are projected by $P$ to the two fixed points of $\tau$) $2g+2 \leq 8$, i.e., $g \leq 3$. As the cases $g=1,2$ are not possible, we must have $g=3$ (this is only possible for $m=6$).
\end{proof}

\subsection{Completely decomposable Jacobians}\label{isogenia}
Isogenous decompositions of Jacobian varieties with group action (see Sections \ref{KR} and \ref{GAD}).  have been extensively studied from different points of view; see for example  \cite{CR}, \cite{yo}, \cite{K-R}, \cite{LR} and \cite{RJ}. In particular, completely decomposable Jacobians (having only elliptic factors) are a big subfield of study; see for example \cite{serre} and \cite{paulhus2}. In our case, in the Theorem \ref{main} we have
\begin{itemize}
\item In case (1) for $q=3$, $S$ has genus $g=4$ and its Jacobian variety is isogenous to $E_{1}^4$, where $E_{1}$ is the elliptic curve $y^{2}=x^{3}-1$. 

\item In case (2(a)i) for $l=2$, $S$ has genus $g=3$ and its Jacobian variety is isogenous to $E_{2}^3$, where $E_{2}$ is the elliptic curve $y^{2}=x(3x^{2}+10x+3)$ 
\end{itemize}

\subsection{Fuchsian uniformizations}
In each case as in Theorem \ref{main} one may provide the corresponding Fuchsian uniformizations. 

\subsubsection{Case (1): $m=2q$, $q \geq 3$ odd}
Let us consider the triangular group
$$\Delta=\langle x,y: x^{2m}=y^{m}=(xy)^{2}=1\rangle$$
and the surjective homomorphism 
$$\Theta:\Delta \to \langle t: t^{m}=1\rangle$$
$$\Theta(x)=t, \; \Theta(y)=t^{-1}.$$

The kernel of $\Theta$ is the subgroup
$$K=\langle \alpha_{1},\ldots,\alpha_{m+1}: \alpha_{1}^{2}=\cdots=\alpha_{m+1}^{2}=\alpha_{1} \cdots \alpha_{m+1}=1\rangle,$$
where $$\alpha_{j}=x^{j}yx^{1-j}, \quad j=1,\ldots,m$$
$$\alpha_{m+1}=x^{m}.$$

Let us now consider the surjective homomorphism
$$\eta:K \to \langle a,b:a^{2}=b^{2}=(ab)^{2}=1\rangle$$
where
$$\eta(\alpha_{j})=\left\{\begin{array}{ll}
b, & j \equiv 1 \mod 2, \; j \neq m+1\\
ab, & j \equiv 0 \mod 2\\
a, & j=m+1
\end{array}
\right. 
$$
and let $\Gamma$ be its kernel. Then $S={\mathbb H}^{2}/\Gamma$, $S/\langle a,b \rangle = {\mathbb H}^{2}/K$ and $S/G={\mathbb H}^{2}/\Delta$.

\subsubsection{Case (2): $m \in \{3l,4l\}$, $m \geq 4$}
Let us consider the triangular group
$$\Delta_{0}=\langle x,z: x^{m}=z^{2}=(xz)^{4}=1\rangle$$ 
the index two subgroup ($y=zxz$)
$$\Delta=\langle x,y: x^{m}=y^{m}=(xy)^{2}=1\rangle$$
and the surjective homomorphism 
$$\Theta:\Delta \to \langle t: t^{m}=1\rangle$$
$$\Theta(x)=t, \; \Theta(y)=t^{-1}.$$

The kernel of $\Theta$ is the subgroup
$$K=\langle \alpha_{1},\ldots,\alpha_{m}: \alpha_{1}^{2}=\cdots=\alpha_{m}^{2}=\alpha_{1} \cdots \alpha_{m}=1\rangle,$$
where $$\alpha_{j}=x^{j}yx^{1-j}, \quad j=1,\ldots,m.$$

Let us now consider the surjective homomorphism
$$\eta:K \to \langle a,b:a^{2}=b^{2}=(ab)^{2}=1\rangle$$
where
$$\eta(\alpha_{j})=\left\{\begin{array}{ll}
a, & j \equiv 1 \mod 3\\
b, & j \equiv 2 \mod 3\\
ab, & j \equiv 0 \mod 3
\end{array}
\right. \quad \mbox{if $m=3l $}
$$
$$\eta(\alpha_{j})=\left\{\begin{array}{ll}
b, & j \equiv 1 \mod 2\\
ab, & j \equiv 0 \mod 2
\end{array}
\right. \quad \mbox{if $m=4l $}
$$
and let $\Gamma$ be its kernel. Then $S={\mathbb H}^{2}/\Gamma$, $S/\langle a,b \rangle = {\mathbb H}^{2}/K$, $S/G={\mathbb H}^{2}/\Delta$ and $S/{\rm Aut}(S)={\mathbb H}^{2}/\Delta_{0}$.

\section{Proof of Theorem \ref{main}}

\subsection{}
It can be checked that the algebraic curves $S$ and the groups $G$ as described in the theorem are such that $S/G$ has signature as required. This provides the existence for the values of $m$ as desired. 

\subsubsection{}
For the curve described in (\ref{caso1}), the quotient $S/G$ has signature $(0;2,m,2m)$, where $m=2q$ and $q \geq 3$ is odd. Let us assume that ${\rm Aut}(S) \neq G.$ Then, by the lists in Singerman's paper  \cite{Singerman}, the signature of $S/{\rm Aut}(S)$ must be $(0;2,3,2m)$. Following the same article (see pp. 37), for the surjective homomorphism
$$\theta:\Delta=\langle x,y: x^{3}=y^{2m}=(xy)^{2}=1\rangle \to {\mathfrak S}_{3}$$
$$\theta(y)=(1,2), \; \theta(x)=(1,2,3), \; \theta(xy)=(1,3)$$
the group $\Gamma=\theta^{-1}(\langle (1,2) \rangle)$ is the Fuchsian group uniformizing the orbifold $S/G$ and $\Delta$ is uniformizing $S/{\rm Aut}(S)$. If $u=y$ and $v=x^{-1}yx^{-1}$, then 
$\Gamma=\langle u,v: u^{2m}=v^{2}=(uv)^{m}=1\rangle$.

If we set $x_{j}=u^{j-1}vu^{1-j}$, where $j=1,\ldots,m$, and $x_{m+1}=u^{m}$, then the subgroup $\Gamma_{0}$ generated by these elements has the presentation
$\Gamma_{0}=\langle x_{1},\ldots,x_{m+1}: x_{1}^{2}=\cdots=x_{m+1}^{2}=x_{1}x_{2}\cdots x_{m+1}=1\rangle$ and it uniformizes the orbifold $S/\langle a, b \rangle$. 

The group uniformizing $S$ is the kernel $K$ of the surjective homomorphism
$$\eta:\Gamma_{0}: \to \langle a,b \rangle$$
$$\eta(x_{2j-1})=b,\; \eta(x_{2j})=ab, \; j=1,\ldots,q,$$
$$\eta(x_{m+1})=a.$$

In order to get a contradiction, we only need to check that $K$ is not a normal subgroup of $\Delta$. If it is a normal subgroup, then, as $x_{1}x_{3} \in K$, we must have that $xx_{1}x_{3}x^{-1} \in K$. Since
$$xx_{1}x_{3}x^{-1}=yx^{-1}y^{2}x^{-1}yx^{-1}y^{-2}x^{-1}$$
and we are assuming $K$ normal in $\Delta$, we also must have that 
$$x^{-1}yx^{-1}y^{2}x^{-1}yx^{-1}y^{-2}\in K.$$

Since $x^{-1}yx^{-1}\in K$, the above asserts that $y^{2}x^{-1}yx^{-1}y^{-2}\in K$, which (again by assuming the normality) asserts that $x_{1}=x^{-1}yx^{-1}\in K$, a contradiction.

\subsubsection{}
For the curves described in (2), the signature of the quotient $S/\langle a,b,t,u\rangle$ (for any of the two cases) has signature $(0;2,4,m)$. 

If $m \geq 6$ and $m \neq 8$, then the signature $(0;2,4,m)$ is maximal \cite{Singerman}. In particular, 
$${\rm Aut}(S)=\langle a,b,t,u\rangle, \; \mbox{if $m \geq 5$ and $m \neq 8$}.$$

If $m=8$, then $S$ is the Riemann surface of genus $g=5$ described by the algebraic curve
$$\left\{\begin{array}{c}
y^{2}=x^{4}+1\\
z^{2}=x^{8}-1.
\end{array}
\right.
$$

In this case, the group $\widehat{G}=\langle a,b,t,u\rangle$ has order $8m=64$. It follows that the order of ${\rm Aut}(S)$ is of the form $64d$, some integer $d \geq 1$. By Singerman's list \cite{Singerman}, either $d=1$ (in which case, ${\rm Aut}(S)=\widehat{G}$) or $d=3$ (in which case $S/{\rm Aut}(S)$ must have signature $(0;2,3,8)$). Let us assume $d=3$.
Following Singerman's paper (see pp. 37), for the surjective homomorphism
$$\theta:\Delta=\langle x,y: x^{3}=y^{8}=(xy)^{2}=1\rangle \to {\mathfrak S}_{3}$$
$$\theta(xy)=(1,3), \; \theta(x)=(1,2,3), \; \theta(y)=(1,2)$$
the group $\Gamma=\theta^{-1}(\langle (1,2) \rangle)$ is the Fuchsian group uniformizing the orbifold $S/\widehat{G}$ and $\Delta$ is uniformizing $S/{\rm Aut}(S)$. If $u=y$ and $v=x^{-1}yx^{-1}$, then $\Gamma=\langle u,v: u^{8}=v^{2}=(uv)^{4}=1\rangle$. If we set $x_{j}=u^{j-1}vu^{1-j}$, where $j=1,\ldots,8$, then the subgroup $\Gamma_{0}$ generated by these elements has the presentation
$\Gamma_{0}=\langle x_{1},\ldots,x_{8}: x_{1}^{2}=\cdots=x_{8}^{2}=x_{1}x_{2}\cdots x_{8}=1\rangle$ and uniformizes the orbifold $S/\widehat{G}$. The derived subgroup $\Gamma_{0}'$ of $\Gamma_{0}$ uniformizes $S$. Since $v \in \Gamma_{0}$ and $u$ permutes the generators $x_{1}, \ldots, x_{8}$, we may see that $\Gamma_{0}'$ is also normal subgroup of $\Gamma$ as supossed to be. Since $y \in \Gamma$, we may see that $y$ normalizes $\Gamma_{0}'$. In our assumption ($d=3$) it must happen that $x$ also must normalize $\Gamma_{0}'$. But, $xx_{1}x^{-1}=yx$ satisfies that $\theta(yx)=(1)(2,3)$, that is, $xx_{1}x^{-1}$ does not belong to $\Gamma$, in particular, it cannot belong to $\Gamma_{0}'$; we get a contradiction.

\subsection{}
Next, we will see that the cases shown in part (\ref{caso2}) are the only situations (the case of part (\ref{caso1}) uses similar arguments and it is left to the interested reader to make the suitable modifications).

Let us assume that $S$ is a closed Riemann surface admitting a group of conformal automorphisms $G \cong {\mathbb Z}_{2}^{2} \rtimes {\mathbb Z}_{m}$ so that the quotient $S/G$ has signature $(0;2,m,m)$. Let us denote by $A$ and $B$ the generators of the normal factor ${\mathbb Z}_{2}^{2}$ and by $\widehat{T}$ the one of the cyclic factor ${\mathbb Z}_{m}$.

The quotient orbifold  ${\mathcal O}=S/\langle A,B\rangle$ has a signature of the form $(\gamma;2,\stackrel{r}{\cdots},2)$ and it admits a conformal automorphism $\widetilde{T}$ of order $m$ induced by $\widehat{T}$ which permutes the cone points. Moreover, since  ${\mathcal O}/\langle \widetilde{T} \rangle =S/G$ has signature of the form $(0;2,m,m)$, the automorphism $\widetilde{T}$ must have two fixed points and must permute the $r$ points in one orbit (so $r=m$). This in particular asserts that $\gamma=0$ and, by the Riemann-Hurwitz formula applied to the branched regular cover induced by $\langle A,B\rangle$, we obtain that  $g=m-3$. 

We may assume that  ${\mathcal O}$ is given by the Riemann sphere $\widehat{\mathbb C}$ and $\widetilde{T}$ is a M\"obius transformation of order $m$. Up to a M\"obius transformation we may assume that $\widetilde{T}(z)=\omega z$, where $\omega=e^{2 \pi i/m}$ and that the $m$ cone points are given by the $m$-roots of unity. If we now consider the  
M\"obius transformation
$$M(z)=\left(\frac{\omega+1}{\omega}\right) \frac{z-\omega}{z-1}$$
and we set 
$$\lambda_{j}=M(\omega^{2+j})=\frac{(\omega+1)(\omega^{1+j}-1)}{\omega^{2+j}-1}, \quad j=1, \ldots ,m-3,$$
we might assume that the $m$ cone points are 
$\infty, 0, 1, \lambda_{1}, \ldots , \lambda_{m-3}$
and $$\widetilde{T}(z)=\frac{(1+\omega)^{2}}{(1+\omega)^{2}-\omega z}.$$

\s

Let us now consider the following generalized Fermat curve 

$$C :  \left \{ \begin{array}{lllllll}
\,\,\,\,\,\,\,\,\,\,\, x_1^2 & + & x_2^2 & + & x_3^2 & =  & 0\\
\,\,\,\,\,\lambda_1 x_1^2 & + & x_2^2 & + & x_4^2 & =  & 0\\
\,\,\,\,\,\,\,\,\,\,\, \vdots & \; & \,\, \vdots & \; &  \,\, \vdots  &  \; & \vdots \\
\lambda_{m-3} x_1^2 & + & x_2^2 & + & x_{m}^2 & =  & 0\\\end{array} \right\} \subset {\mathbb P}^{m-1}$$
which is a closed Riemann surface of genus $g_{C}=1+2^{m-3}( m-4 )$ (for details, see \cite{CGHR} and \cite{GHL}). 

The curve $C$ admits the linear automorphisms 
$$a_{j}([x_{1}:\cdots:x_{m}])=[x_{1}:\cdots:x_{j-1}:-x_{j}:x_{j+1}:\cdots:x_{m}], \quad j=1,\ldots,m-1.$$ 

Set $a_{m}=a_1 a_2 \cdots a_{m-1}$ (multiplication by $-1$ the coordinate $x_{m}$). So, the curve $C$ admits the following abelian group of conformal automorphisms
$${\mathbb Z}_{2}^{m-1} \cong F=\langle a_1,\ldots ,a_{m-1} \rangle.$$ 

The map 
$$\pi:C \to \widehat{\mathbb C}; \hspace{0,5 cm}\pi([x_{1}:\cdots:x_{m}])=-\left({x_{2}}/{x_{1}}\right)^{2}$$
is a regular branched cover with $F$ as its deck group and whose branch values are 
$\infty, 0, 1, \lambda_{1}, \ldots, \lambda_{m-3}.$

As consequence of the results in \cite{H1}, there must be a subgroup $H \cong {\mathbb Z}_{2}^{m-3}$ of $F$ acting freely on $C$ so that $S=C/H$. 

\s

Observe that  
$$\widetilde{T}(\infty)=0, \; \widetilde{T}(0)=1, \; \widetilde{T}(1)=\lambda_1, \; \widetilde{T}(\lambda_1)=\lambda_2, \; \ldots, 
\widetilde{T}(\lambda_{m-4})=\lambda_{m-3}, \; T(\lambda_{m-3})=\infty.$$ If  $L(z)=1/z$ and $$\widetilde{U}(z)=M \circ L \circ M^{-1}(z)=-z+(1+\omega)^{2}/\omega$$ then
$$\widetilde{U}(\infty)=\infty, \; \widetilde{U}(0)=\lambda_{m-3}, \; \widetilde{U}(1)=\lambda_{m-4}, \; \widetilde{U}(\lambda_{j})=\lambda_{m-4-j},\; j=1,\ldots ,m-5.$$

\s

Note that for $m$ odd none of the values $\lambda_{j}$ is fixed by $\widetilde{U}$. If $m$ is even, then 
$\widetilde{U}$ only fixes $\lambda_{(m-4)/2}=-1$ and none of the others. Moreover, 
$\langle \widetilde{U},\widetilde{T} \rangle \cong {\mathbb D}_{m}$.

As a consequence of the results in \cite{H1}, there exist linear automorphisms
$T, U\in {\rm Aut}(C)$ (each one normalizing $F$) so that $\pi \circ T = \widetilde{T} \circ \pi$ and $\pi \circ U = \widetilde{U} \circ \pi$. In fact, by \cite{GHL}, we have that 
$$T([x_1:\cdots:x_{n+1}])=[x_{m}: \alpha_1 x_1: \alpha_2 x_2: \cdots: \alpha_{m-2} x_{m-2}: \alpha_{m-1} x_{m-1}]$$
$$\alpha_{1}=\sqrt{\lambda_{n-2}}, \; \alpha_{2}=1, \; \alpha_{j+2}=i\sqrt{\lambda_{j}}, \; j=1,\ldots ,m-3.$$

As $T$ induces $\widehat{T}$, the subgroup $H$ is normalized by $T$. Let us observe that
$$T \circ a_{j}=a_{j+1} \circ T, \quad j=1, \ldots, m-1,$$
$$T \circ a_{m}=a_{1} \circ T,$$
$$\langle T, F\rangle=F \rtimes \langle T \rangle \cong {\mathbb Z}_{2}^{m-1} \rtimes {\mathbb Z}_{m}.$$

Again from \cite{GHL}, 
$$U([x_1:\cdots:x_{m}])=
[x_1: x_{m}: i x_{m-1}: i x_{m-2}: \cdots: i x_4: i x_3:  x_2].$$

Note that
$$U^{2}([x_1:\cdots:x_{m}])=[x_1:  x_2: - x_3: - x_{4}: \cdots: - x_{m-2}: - x_{m-1}:  x_{m}] \in F$$

As a consequence of all the above, the subgroup $F$ is normal in $\langle F, T, U \rangle$ and  $$\langle F, T, U \rangle / F \cong {\mathbb D}_{m}.$$

The quotient orbifold $C/\langle F, T\rangle$ is equal to the quotient orbifold ${\mathcal O}/\langle \widetilde{T} \rangle=S/G$ whose of signature is $(0;2,m,m).$ We notice that $C/\langle F, T, U \rangle$ has signature $(0;2,4,m)$. 

\s

If $m \geq 5$ and $m \neq 8$, then the signature $(0;2,4,m)$ is maximal \cite{Singerman}; so 
$${\rm Aut}(C)=\langle F, T, U \rangle$$ and 
$C/{\rm Aut}(C)$ has signature $(0;2,4,m)$.

\s

Next we proceed to see that there exist subgroups $H$ as above only in the cases that $m$ is either divisible by $3$ or by $4$.

\begin{lemm}\label{lema1}
Let $H<F$ so that $H \cong {\mathbb Z}_{2}^{m-3}$ acts freely on $C$ and such that $T H T^{-1}=H$. Then one of the following holds.
\begin{enumerate}
\item $m \equiv 0 \mod{4}$ and 
$$H=\langle a_{1}a_{3}, a_{2}a_{4}, a_{3}a_{5},\ldots, a_{m-1}a_{1}, a_{m}a_{2} \rangle.$$

\item $m \equiv 0 \mod{3}$ and
$$H=\langle a_{1}a_{2}a_{3}, a_{2}a_{3}a_{4}, a_{3}a_{4}a_{5},\ldots, a_{m-2}a_{m-1}a_{m}, a_{m-1}a_{m}a_{1}, a_{m}a_{1}a_{2}\rangle.$$
\end{enumerate}
\end{lemm}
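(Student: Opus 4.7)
The plan is to reinterpret the problem as classifying $T$-invariant codimension-$2$ subspaces of an $\mathbb{F}_2$-vector space avoiding a distinguished orbit. Working additively, view $F = \mathbb{F}_2^{m-1}$, with $a_1,\ldots,a_{m-1}$ a basis and $a_m = a_1 + \cdots + a_{m-1}$, so the single relation among $a_1,\ldots,a_m$ is $a_1 + a_2 + \cdots + a_m = 0$. By the structure of the generalized Fermat cover $\pi\colon C \to \widehat{\mathbb C}$, the non-trivial elements of $F$ having fixed points on $C$ are exactly $a_1,\ldots,a_m$; hence $H$ acts freely if and only if $a_i\notin H$ for every $i$. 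Set $\bar V := F/H \cong \mathbb{F}_2^2$, let $\phi\colon F \to \bar V$ be the quotient map, and let $\bar T \in \mathrm{GL}(\bar V) \cong \mathrm{GL}_2(\mathbb{F}_2) \cong \mathfrak{S}_3$ be the automorphism induced by $T$. The normality $THT^{-1}=H$ means $\phi$ is $T$-equivariant, and since $T a_i T^{-1}= a_{i+1}$ (indices mod $m$), the values $v_i := \phi(a_i)$ satisfy $v_{i+1} = \bar T v_i$; freeness says each $v_i$ is nonzero.

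Setting $v := v_1$, we get $v_i = \bar T^{i-1}v$, so the $v_i$'s form a single $\langle \bar T\rangle$-orbit in $\bar V\setminus\{0\}$. Since $|\bar V\setminus\{0\}| = 3$, this orbit has size $1$, $2$, or $3$, and I would proceed by casework on $\mathrm{ord}(\bar T)\in\{1,2,3\}$.

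If $\bar T v = v$, then $\phi$ takes only the value $v$ on all the $a_i$, so $\mathrm{im}\,\phi = \langle v\rangle$ has dimension $1$, contradicting $\bar V \cong \mathbb F_2^2$. If $\bar T$ has order $2$ on the orbit, the $v_i$ alternate between two linearly independent vectors $v,v'\in \bar V$; compatibility $v_{m+1}=v_1$ forces $m$ even, and the relation $\sum_i v_i=0$ becomes $(m/2)(v+v')=0$, yielding $m\equiv 0\pmod 4$. In this case $\ker\phi$ consists of those $\sum \epsilon_i a_i$ for which the count of odd-index and the count of even-index terms are each even; this is exactly the subgroup generated by $\{a_i a_{i+2}\}_{i=1}^{m}$ in the statement. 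If $\bar T$ has order $3$, the orbit is all of $\bar V\setminus\{0\}$, forcing $m\equiv 0\pmod 3$; the constraint $\sum v_i = 0$ is then automatic, and the kernel contains every $a_i a_{i+1} a_{i+2}$ (since the sum of the three nonzero elements of $\mathbb F_2^2$ vanishes), yielding the second family.

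The dimension count $\dim H = m-3$ matches both kernels (modulo the single relation $\sum a_i = 0$), confirming that each produces a valid subgroup, and the argument above shows no other $T$-invariant, freely-acting $H$ can occur. The only step requiring some care is verifying that the listed families of elements really do generate the full kernel and not a proper subspace; I would handle this by exhibiting $m-3$ independent generators in each case after reducing modulo $\sum a_i = 0$. I do not anticipate a serious obstacle: the reduction to an $\mathbb F_2$-linear-algebra problem on a $3$-element orbit under a subgroup of $\mathfrak{S}_3$ makes the whole lemma essentially a finite check driven by the accidental isomorphism $\mathrm{GL}_2(\mathbb F_2)\cong \mathfrak{S}_3$.
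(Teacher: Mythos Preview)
Your proof is correct and follows essentially the same approach as the paper: both pass to the quotient map $\phi\colon F\to F/H\cong\mathbb{Z}_2^2$, use $T$-equivariance to get an induced automorphism $\rho$ (your $\bar T$) satisfying $\phi(a_{j+1})=\rho(\phi(a_j))$, and then case on whether $\rho$ has order $2$ or $3$ in $\mathrm{GL}_2(\mathbb{F}_2)\cong\mathfrak{S}_3$. Your write-up is in fact a bit more explicit than the paper's in ruling out the trivial case and in using the relation $\sum_i a_i=0$ to upgrade ``$m$ even'' to ``$4\mid m$''.
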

\begin{proof}
Let us consider a surjective homomorphism 
$\phi:F \to {\mathbb Z}_{2}^{2}$  so that 
\begin{enumerate}
\item[(i)] $H=\ker(\phi)$ does not contains the elements $a_1$,\ldots, $a_{m}$ (these are the only elements of $F$ acting with fixed points on $C;$ see \cite{GHL}); and 
\item[(ii)] $T H T^{-1}=H$.  
\end{enumerate}

If we denote by $T^*$ the automorphism of $F$ given by conjugation by $T$, then there is an automorphism 
$\rho$ of ${\mathbb Z}_{2}^{2}$  so that $$\rho \circ \phi = \phi \circ T^{*}.$$

As $a_1 \notin H$ we must have that $\phi(a_1) \neq \mbox{id}$. Set $a:=\phi(a_1)$. Now, as 
$T^{*}(a_{j})=a_{j+1}$ and $\phi$ is surjective, it should happen that $\rho(a) \neq a$. Set $b:=\rho(a)$. Then, ${\mathbb Z}_{2}^{2}=\langle a,b \rangle$.

There are only two possibilities for $\rho$; these being the following ones:

\begin{enumerate}
\item $\rho(a)=b, \; \rho(b)=a,\; \rho(ab)=ab$.
\item $\rho(a)=b, \; \rho(b)=ab, \; \rho(ab)=a$.
\end{enumerate}

In case (1) it holds that  $\phi(a_{2j-1})=a$ and $\phi(a_{2j})=b$. In this situation, we must have that  $m$ is divisible by $4$, and  
$$H=\ker(\rho)=\langle a_{1}a_{3}, a_{2}a_{4}, a_{3}a_{5},\ldots , a_{m-1}a_{1}, a_{m}a_{2} \rangle.$$

In case (2) it holds that $\phi(a_{1+3j})=a$, $\phi(a_{2+3j})=b$ and $\phi(a_{3+3j})=ab$. In this situation we must now have that 
$m$ is divisible by $3$, and 
$$H=\ker(\rho)=\langle a_{1}a_{2}a_{3}, a_{2}a_{3}a_{4}, a_{3}a_{4}a_{5},\ldots , a_{m-2}a_{m-1}a_{m}, a_{m-1}a_{m}a_{1}, a_{m}a_{1}a_{2}\rangle.$$
\end{proof}

The Riemann surface defined by $C$ is the highest abelian branched cover of the orbifold ${\mathcal O}=S/\langle A,B \rangle;$ thus it is uniquely determined up to isomorphisms.
If $\sigma \in {\rm Gal}(\overline{\mathbb Q}/{\mathbb Q})$, then $C^{\sigma}$ is also a highest abelian branched cover 
 of the orbifold ${\mathcal O};$ let us denote by $F'$ the associated deck group. By the uniqueness property, there exists an isomorphism $\varphi : C \to C^{\sigma}.$ We observe that $$F'=\langle b_1, \ldots, b_{m-1} \rangle$$where $b_i=\varphi a_i \varphi^{-1}.$  We can also consider the regular branched cover $S^{\sigma} \to \mathcal{O};$ we shall denote by $H'$ its deck group.

Lemma \ref{lema1} asserts that the only possibility for the existence of a closed Riemann surface $S$ admitting a group $G \cong {\mathbb Z}_{2}^{2} \rtimes {\mathbb Z}_{m}$, $m \geq 3$, of conformal automorphisms with $S/G$ of signature $(0;2,m,m)$ is that $m$ is either divisible by $3$ or by $4$.

\subsubsection{}
If $m$ is divisible by $4$ and not by $3$, then the normal subgroup $\langle A, B \rangle  \cong {\mathbb Z}_{2}^{2}$ has two non-trivial elements acting with fixed points (each one having exactly $m$ fixed points) whose product acts freely on $S$. Without loss of generality, we can suppose that the branch values $\mu_1=\infty, \mu_2=0, \mu_3=1, \mu_4=\lambda_1,\ldots,  \mu_m=\lambda_{m-3}$ of $\pi$ are the $m-$roots of the unity and that $\sigma$ induces the permutation $\mu_i \mapsto \mu_{i+2s}$ for some $1 \le s \le m/2-1.$ We can see that the above implies that $b_i=a_{i+2s}$ and that $$H'=\langle b_ib_{i+2}=a_{i+2s}b_{i+2+2s} : 1 \le i \le m \rangle=H.$$ It follows that $S \cong S^{\sigma}.$

\subsubsection{}
If $m$ is divisible by $3$ and not by $4$, then the three non-trivial elements of the normal subgroup $\langle A, B \rangle \cong  {\mathbb Z}_{2}^{2}$ acts with fixed points (each one having exactly $2m/3$ fixed points). Without loss of generality, we can suppose that the branch values $\mu_1=\infty, \mu_2=0, \mu_3=1, \mu_4=\lambda_1,\ldots,  \mu_m=\lambda_{m-3}$ of $\pi$ are the $m-$roots of the unity and that $\sigma$ induces the permutation $\mu_i \mapsto \mu_{i+3s}$ for some $1 \le s \le m/3-1.$ We can see that the above implies that $b_i=a_{i+3s}$ and that $$H'=\langle b_ib_{i+1}b_{i+2}=a_{i+3s}a_{i+1+3s}a_{i+2+3s} : 1 \le i \le m \rangle=H.$$It follows that $S \cong S^{\sigma}.$

In both cases $\{ \sigma : S \cong S^{\sigma}\}=\mbox{Gal}(\overline{\mathbb{Q}}/\mathbb{Q})$. Now, from a result of Wolfart on minimal field of definition of regular Belyi curves \cite{Wolfart}, we are in position to conclude that $S$ is definable over ${\mathbb Q}.$ These curves are described by the algebraic curves of parts (\ref{caso2a}) and (\ref{caso2b}) of the theorem.

\subsubsection{}
If $m$ is divisible by $3$ and $4$, then there are two possible actions. But, as observed above, in one case the three elements of order two of the normal subgroup ${\mathbb Z}_{2}^{2}$ act with fixed points and in the other case this is not the case. In particular, these two pairs are non-isomorphic and, for any of the two cases, the surface $S$ is definable over ${\mathbb Q}$.

\subsection{Isogenous decomposition of the Jacobian variety}\label{KR}
In this section we prove the isogenous decomposition of the Jacobian variety $JS$ for the quasiplatonic curves described in Theorem \ref{main}. We will use Kani-Rosen's decomposition theorem \cite{K-R}.

\begin{coro}[\cite{K-R}]\label{coroKR}
Let $S$ be a closed Riemann surface of genus $g \geq 1$ and let $H_{1},\ldots,H_{t}<{\rm Aut}(S)$ such that:
\begin{enumerate}
\item[(i)] $H_{i} H_{j}=H_{j} H_{i}$, for all $i,j =1,\ldots,t$;
\item[(ii)] $g\,(S/H_{i}H_{j})=0$, for $1 \leq i < j \leq t$
\item[(iii)] $g=\sum_{j=1}^{t} g\,(S/H_{j})$.
\end{enumerate}

Then 
$$JS \sim  \prod_{j=1}^{t} J\,(S/H_{j}).$$
\end{coro}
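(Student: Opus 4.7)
The plan is to apply the fundamental principle behind the Kani--Rosen theorem: any $\mathbb{Q}$-linear identity among idempotents in $\mathbb{Q}[G]$, where $G=\langle H_{1},\ldots,H_{t}\rangle \le {\rm Aut}(S)$, translates into an isogeny relation among the corresponding images inside $JS$. To each subgroup $H \le G$ one associates the symmetrizer
$$\varepsilon_{H}:=\frac{1}{|H|}\sum_{h \in H} h \in \mathbb{Q}[G],$$
which is idempotent, and whose image $\varepsilon_{H}(JS)$ under the natural action $\mathbb{Q}[G] \to {\rm End}(JS)\otimes \mathbb{Q}$ is an abelian subvariety isogenous to $J(S/H)$.

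First I would exploit hypothesis (i): since $H_{i}H_{j}=H_{j}H_{i}$, this set is a subgroup of $G$, and a short direct computation (using that every element of $H_{i}H_{j}$ has exactly $|H_{i}\cap H_{j}|$ representations as $ab$ with $a\in H_{i}$, $b\in H_{j}$) yields the identity
$$\varepsilon_{H_{i}}\,\varepsilon_{H_{j}} \;=\; \varepsilon_{H_{i}H_{j}} \quad \text{in } \mathbb{Q}[G].$$
Applying this to $JS$ and combining with hypothesis (ii),
$$\varepsilon_{H_{i}}\varepsilon_{H_{j}}(JS)\;\sim\;J(S/H_{i}H_{j})\;=\;0 \qquad (i \ne j),$$
so the $\varepsilon_{H_{j}}$ act as a family of pairwise orthogonal idempotents in ${\rm End}(JS)\otimes \mathbb{Q}$.

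It then follows that $\varepsilon:=\sum_{j=1}^{t}\varepsilon_{H_{j}}$ is itself an idempotent in ${\rm End}(JS)\otimes \mathbb{Q}$, since all cross-terms vanish, and orthogonality gives
$$\varepsilon(JS)\;\sim\;\prod_{j=1}^{t}\varepsilon_{H_{j}}(JS)\;\sim\;\prod_{j=1}^{t} J(S/H_{j}).$$
Hypothesis (iii) shows that $\dim \varepsilon(JS)=\sum_{j} g(S/H_{j})=g=\dim JS$; consequently $\varepsilon$ must be the identity of ${\rm End}(JS)\otimes \mathbb{Q}$, and the claimed isogeny $JS \sim \prod_{j=1}^{t} J(S/H_{j})$ follows.

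The technical heart---and the step I expect to be the main obstacle---is justifying the passage from orthogonality of idempotents in ${\rm End}(JS)\otimes \mathbb{Q}$ to a genuine isogeny of abelian subvarieties, rather than a mere numerical dimension match. This is precisely where Poincar\'e's complete reducibility theorem enters: two abelian subvarieties whose associated idempotents multiply to zero intersect in a finite group, so their sum is isogenous to their product. Granted this input from the theory of abelian varieties, the argument reduces to the elementary ring-theoretic identity $\varepsilon_{H_{i}}\varepsilon_{H_{j}}=\varepsilon_{H_{i}H_{j}}$ together with the dimension accounting provided by (iii).
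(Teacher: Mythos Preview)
Your argument is correct and is essentially the standard Kani--Rosen proof. Note, however, that the paper does not supply its own proof of this corollary: it is simply quoted from \cite{K-R} and then applied. So there is nothing to compare against; your sketch recovers the original argument (idempotent relation $\varepsilon_{H_i}\varepsilon_{H_j}=\varepsilon_{H_iH_j}$, orthogonality via (ii), and the dimension count via (iii)) exactly as Kani and Rosen present it.
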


In each of our cases we use the following three order two cyclic groups  
$$H_{1}=\langle a \rangle, \; H_{2}=\langle b \rangle, \; H_{3}=\langle ab \rangle.$$

If $k \in \{a,b,ab\}$, we denote by $S_{k}$ the Riemann surface structure subjacent of the orbifold $S/\langle k \rangle$ and its genus by $g_{k}$.

It is clear that conditions (i) and (ii) of Corollary \ref{coroKR} are satisfied and we only need to check the condition (iii).

\subsubsection{} In the case (\ref{caso1}) of our theorem, that is, $m=2q$ ($q$ odd), we have that $S_{a}$, $S_{b}$ and $S_{ab}$ are, respectively, the following hyperelliptic curves 
$$z^{2}=x^{m}-1 \hspace{1 cm} y^{2}=x^{q}-1 \hspace{1 cm} w^{2}=x^{q}+1$$
which have respective genera equal to $g_{a}=(m-2)/2$, $g_{b}=(m-2)/4$ and $g_{ab}=(m-2)/4$; so $g_{a}+g_{b}+g_{ab}=m-2$ and condition (iii) is then satisfied. Moreover, we may see that $S_{b}$ and $S_{ab}$ are isomorphic; so we obtain that $JS \sim JS_{a} \times (JS_{b})^{2}$.

Also, let us observe that the hyperelliptic curve $S_{a}$ admits an extra conformal involution $d$, induced by $t^{q}$, with two fixed points. The quotient $S_{a,1}=S_{a}/\langle d \rangle$ has equation $w_{1}^{2}=v_{1}^{q}-1$ and $S_{a,2}=S_{a}/\langle \iota_{a} \circ d\rangle$ (where $\iota_{a}$ denotes the hyperelliptic involution), has equation $w_{2}^{2}=v_{2}(v_{2}^{q}-1)$. Clearly, $S_{a,1}$ and $S_{a,2}$ are isomorphic curves. Again, applying Kani-Rosen result, we obtain that $JS_{a} \sim (JS_{a,1})^{2}=(JS_{b})^{2}$.

\subsubsection{} In the case (\ref{caso2a}) of our theorem ($m=3l$), we have that $S_{a}$, $S_{b}$ and $S_{ab}$ are, respectively, the following hyperelliptic curves 
$$z^{2}=(x^{l}-\omega_{3})(x^{l}-\omega_{3}^{2})=x^{2l}+x^{l}+1\hspace{1 cm} y^{2}=(x^{l}-1)(x^{l}-\omega_{3}^{2}) \hspace{1 cm} w^{2}=(x^{l}-1)(x^{l}-\omega_{3})$$
of genera $g_{a}=g_{b}=g_{ab}=l-1$; so $g_{a}+g_{b}+g_{c}=m-3$ and condition (iii) is then satisfied. Moreover, we may see that $S_{a}$, $S_{b}$ and $S_{ab}$ are isomorphic, that is, $JS \sim (JS_{a})^{3}$. 

But in this case, the order $4$ automorphism $u$ of $S$ induces the automorphism $d(x,z)=(1/x,z/x^{l})$ of order two of $S_{a}$ (acting with two fixed points for $l$ odd and four fixed points if $l$ is even). We may apply Kani-Rosen's result using the groups $K_{1}=\langle d \rangle$ and $K_{2}=\langle j_{a} \circ d\rangle$, where $j_{a}(x,z)=(x,-z)$ is the hyperelliptic involution of $S_{a}$ to obtain that $JS_{a} \sim JS_{u} \times JS_{bu}$, where $S_{u}$ is the subjacent Riemann surface of the quotient 
$S_{a}/K_{1}=S/\langle u\rangle$ and $S_{bu}$ is the subjacent Riemann surface of the quotient $S_{a}/K_{2}=S/\langle bu \rangle$. To obtain explicit equations we first observe that $S_{a}$ is isomorphic to $$w^{2}=(v+1)^{2l}+(1-v^{2})^{l}+(1-v)^{2l}$$ by the isomorphism $$(x,z) \mapsto (v,w)=\left(\frac{x-1}{x+1}, z\left(\frac{2}{x+1}\right)^{l}\right)=(1-v^{2})^{l}+2\sum_{j=0}^{l}\binom{2l}{2j}v^{2j}.$$

In this new model, the automorphism $d$ is given as $d(v,w)=(-v,w)$ and $j_{a}\circ d$ is $(v,w) \mapsto (-v,-w)$. Then an equation for $S_{u}$ is given by
$$w_{1}^{2}=(1-v_{1})^{l}+2\sum_{j=0}^{l}\binom{2l}{2j}v_{1}^{j}$$
and an equation for $S_{bu}$ is given by
$$w_{2}^{2}=v_{2}\left((1-v_{2})^{l}+2\sum_{j=0}^{l}\binom{2l}{2j}v_{2}^{j}\right).$$

\subsubsection{} In the case (\ref{caso2b}) of our theorem ($m=4l$), we have that $S_{a}$, $S_{b}$ and $S_{ab}$ are, respectively, the following hyperelliptic curves 
$$z^{2}=x^{m}-1 \hspace{1 cm}y^{2}=x^{2l}-1 \hspace{1 cm}w^{2}=x^{2l}+1$$
of genera $g_{a}=2l-1$, $g_{b}=g_{ab}=l-1$; so $g_{a}+g_{b}+g_{ab}=m-3$ and condition (iii) is then satisfied. Moreover, we may see that $S_{b}$ and $S_{ab}$ are isomorphic; so $JS \sim JS_{a} \times (JS_{b})^{2}$.

Also, let us observe that the hyperelliptic curve $S_{a}$ admits an extra conformal involution $d$, induced by $t^{2l}$, with four fixed points. The quotient $S_{a,1}=S_{a}/\langle d \rangle$ has equation $w_{1}^{2}=v_{1}^{2l}-1$ and $S_{a,2}=S_{a}/\langle \iota_{a} \circ d\rangle$ (where $\iota_{a}$ denotes the hyperelliptic involution), has equation $w_{2}^{2}=v_{2}(v_{2}^{2l}-1)$. Clearly, $S_{a,1}$ and $S_{a,2}$ are not isomorphic curves as they have different genera. Applying Kani-Rosen result, we obtain that $JS_{a} \sim JS_{a,1} \times JS_{a,2}=JS_{b} \times JS_{a,2}$; so
$JS \sim (JS_{b})^{3} \times JS_{a,2}$.

\section{Curves over ${\mathbb Q}$ for the case $m$ divisible by $3$} \label{modelito}
Theorem \ref{main} asserts that if $S$ admits a group of conformal automorphisms $H \cong {\mathbb Z}_{2}^{2} \rtimes {\mathbb Z}_{m}$ so that $S/H$ has triangular signature, then $S$ is definable over ${\mathbb Q}$. In the same theorem explicit curves are provided, all of them defined over ${\mathbb Q}$ with the exception of one case: $m=3l$ and $S/H$ of triangular signature $(0;2,m,m)$. In this last case, there is provided a curve over the degree two extension ${\mathbb Q}(\omega_{3})$
$$ C: \left\{\begin{array}{l}
y^{2}=x^{2l}+\omega_{3} x^{l}+\omega_{3}^{2}\\
z^{2}=x^{2l}+x^{l}+1
\end{array}
\right.
$$

In this section we use the computation algorithm presented in \cite{HR} in order to indicate how to find another algebraic representation of $S$ defined over ${\mathbb Q}$.

Let $\Gamma={\rm Gal}({\mathbb Q}(\omega_{3})/{\mathbb Q})=\langle \sigma \rangle \cong {\mathbb Z}_{2}$, where $\sigma(\omega_{3})=\omega_{3}^{2}$. In this way the Galois orbit of $C$ consists of $C$ and the curve
$$ C^{\sigma}: \left\{\begin{array}{l}
y^{2}=x^{2l}+\omega_{3}^{2} x^{l}+\omega_{3}\\
z^{2}=x^{2l}+x^{l}+1
\end{array}
\right.
$$

The map 
$$f_{\sigma}(x,y,z)=\left(\frac{1}{x},\frac{\omega_{3}^{2} y}{x^{l}}, \frac{z}{x^{l}} \right)$$
provides an isomorphism $f_{\sigma}:C \to C^{\sigma}$. 

We may observe that $f_{\sigma}^{\sigma} \circ f_{\sigma}$ is the identity map; so the set $\{I,f_{\sigma}\}$ defines a Weil datum for $C$ with respect to the Galois extension ${\mathbb Q}(\omega_{3})/{\mathbb Q}$.

The map
$$\Phi:C \to \Phi(C) \subset {\mathbb C}^{6}: (x,y,z) \mapsto \left( x,y,z,\frac{1}{x},\frac{\omega_{3}^{2} y}{x^{l}}, \frac{z}{x^{l}} \right)$$
defines an isomorphism between $C$ and $\Phi(C)$ (its inverse is just the projection on the first three coordinates).

We consider the permutation action $\Theta:\Gamma \to {\rm GL}(6,{\mathbb C})$ given by $$\Theta(\sigma)(x_{1},x_{2},x_{3},x_{4},x_{5},x_{6})=(x_{4},x_{5},x_{6},x_{1},x_{2},x_{3}).$$

A set of generators of the algebra of $\Gamma$-invariant polynomials ${\mathbb C}[x_{1},x_{2},x_{3},x_{4},x_{5},x_{6}]^{\Gamma}$ is given by
$$t_{1}=x_{1}+x_{4}, \; t_{2}=x_{2}+x_{5}, \; t_{3}=x_{3}+x_{6},$$
$$t_{4}=x_{1}x_{4}, \; t_{5}=x_{2}x_{5}, \; t_{6}=x_{3}x_{6},$$
$$t_{7}=x_{1}x_{2}+x_{4}x_{5}, \; t_{8}=x_{1}x_{3}+x_{4}x_{6}, \; t_{9}=x_{2}x_{3}+x_{5}x_{6}.$$

\begin{rema}
Observe that if $(x_{1},\ldots,x_{6}) \in \Phi(C)$, then $t_{4}=1$.
\end{rema}

Let us consider the branched cover
$$\Psi:{\mathbb C}^{6} \to {\mathbb C}^{9}:(x_{1},\ldots,x_{6}) \mapsto (t_{1},\ldots,t_{9}).$$

The results in \cite{HR} asserts that $C$ is isomorphic to $D=\Psi(\Phi(C))$ and that $D$ is defined over ${\mathbb Q}$. In order to find the equations for $D$ we proceed as follows.

It is possible to observe that 
$$\Psi({\mathbb C}^{6})=\left\{ \begin{array}{c}
t_{1}^{2}t_{5}-t_{1}t_{2}t_{7}+t_{2}^{2}t_{4}-4 t_{4}t_{5}+t_{7}^{2}=0\\
t_{1}^{2}t_{6}-t_{1}t_{3}t_{8}+t_{3}^{2}t_{4}-4 t_{4}t_{6}+t_{8}^{2}=0\\
t_{2}^{2}t_{6}-t_{2}t_{3}t_{9}+t_{3}^{2}t_{5}-4 t_{5}t_{6}+t_{9}^{2}=0
\end{array}
\right\}.
$$

We also have the equalities
$$(1) \; y=\frac{t_{7}-t_{1}t_{2}+t_{2}x}{2x-t_{1}}$$
$$(2) \; z=\frac{t_{8}-t_{1}t_{3}+t_{3}x}{2x-t_{1}}$$
$$(3) \; x^{2}=t_{1}x-t_{4}$$

Now, (1) and (3) above assert that the equality
$$(4) \; y^{2}=x^{2l}+\omega_{3}^{2}x^{l}+\omega_{3}=(x^{l}-1)(x^{l}-\omega_{3})$$
can be written as
$$(t_{7}-t_{1}t_{2})^{2}-t_{2}^{2}t_{4}+t_{2}(2t_{7}-t_{1})x= (x^{l}-1)(x^{l}-\omega_{3})(2x-t_{1})^{2}.$$

Equality (3) asserts that 
$$(2x-t_{1})^{2}=t_{1}^{2}-4t_{4}$$
and that there are polynomials  $P,Q \in {\mathbb Q}(\omega_{3})[t_{1},t_{4}]$ so that
$$(x^{l}-1)(x^{l}-\omega_{3})=P(t_{1},t_{4})x+Q(t_{1},t_{4}).$$

\begin{rema}
If $l=2$, then $P(t_{1},t_{4})=t_{1}(t_{1}^{2}+2t_{4}-\omega_{3}^{2})$ and $Q(t_{1},t_{4})=(1+t_{4})(\omega_{3}+t_{4})-t_{1}^{2}t_{4}$.
\end{rema}

All the above asserts that (4) is equivalent to 
$$(t_{7}-t_{1}t_{2})^{2}-t_{2}^{2}t_{4}+t_{2}(2t_{7}-t_{1})x= (P(t_{1},t_{4})x+Q(t_{1},t_{4}))(t_{1}^{2}-4t_{4}),$$
from which we obtain 
$$x=R(t_{1},t_{2},t_{4},t_{7})=\frac{(t_{7}-t_{1}t_{2})^{2}-t_{2}^{2}t_{4}-(t_{1}^{2}-4t_{4})Q(t_{1},t_{4})}{(t_{1}^{2}-4t_{4})P(t_{1},t_{4})-t_{2}(2t_{7}-t_{1}) }.$$

Now, using this expression for $x$, we use (1) and (2) to obtain rational expressions for $y$ and $z$ as follows:

$$y=\frac{P(t_{1},t_{4}) ( t_{1}^3 t_{2}- t_{1}^2 t_{7}-4  t_{1} t_{2} t_{4}+4  t_{4} t_{7})+Q(t_{1},t_{4}) (t_{1}^2 t_{2}-4 t_{2} t_{4})-t_{1}^2 t_{2}^3+t_{1}^2 t_{2}^2-t_{1} t_{2} t_{7}+t_{2}^3 t_{4}+t_{2} t_{7}^2}
{P(t_{1},t_{4}) (t_{1}^3-4 t_{1} t_{4})+2 Q(t_{1},t_{4}) (t_{1}^2-4  t_{4})-2 t_{1}^2 t_{2}^2+t_{1}^2 t_{2}+2 t_{1} t_{2} t_{7}+2 t_{2}^2 t_{4}-2 t_{7}^2}$$

$$z=\frac{P(t_{1},t_{4}) (t_{1}^3 t_{3}- t_{1}^2 t_{8}-4 t_{1} t_{3} t_{4}+4 t_{4} t_{8})+Q(t_{1},t_{4}) (t_{1}^2 t_{3}-4 t_{3} t_{4})-t_{1}^2 t_{2}^2 t_{3}+t_{1}^2 t_{2} t_{3}-t_{1} t_{2} t_{8}+t_{2}^2 t_{3} t_{4}+2 t_{2} t_{7} t_{8}-t_{3} t_{7}^2}
{P(t_{1},t_{4}) (t_{1}^3-4 t_{1} t_{4})+2 Q(t_{1},t_{4}) (t_{1}^2-4 t_{4})-2 t_{1}^2 t_{2}^2+t_{1}^2 t_{2}+2 t_{1} t_{2} t_{7}+2 t_{2}^2 t_{4}-2 t_{7}^2}$$

In this way, equation 
$$y^{2}=(x^{l}-1)(x^{l}-\omega_{3})$$
can be written as an equation
$$E(t_{1},\ldots,t_{9})=0$$
and the equation
$$z^{2}=(x^{l}-\omega_{3})(x^{l}-\omega_{3}^{2})$$
can be written as an equation
$$F(t_{1},\ldots,t_{9})=0,$$
where $E,F \in {\mathbb Q}(\omega_{3})[t_{1},\ldots,t_{9}]$.

All the above asserts that $D$ is defined as the common zeroes of 
$$D: \left\{ \begin{array}{c}
t_{1}^{2}t_{5}-t_{1}t_{2}t_{7}+t_{2}^{2}t_{4}-4 t_{4}t_{5}+t_{7}^{2}=0\\
t_{1}^{2}t_{6}-t_{1}t_{3}t_{8}+t_{3}^{2}t_{4}-4 t_{4}t_{6}+t_{8}^{2}=0\\
t_{2}^{2}t_{6}-t_{2}t_{3}t_{9}+t_{3}^{2}t_{5}-4 t_{5}t_{6}+t_{9}^{2}=0\\
E(t_{1},\ldots,t_{9})=0\\
F(t_{1},\ldots,t_{9})=0
\end{array}
\right\}.
$$

The first three equations are given by polynomials with coefficients in ${\mathbb Q}$. The last two, $E$ and $F$, may still have coefficients on ${\mathbb Q}(\omega_{3})$. In this case we may change them by the following traces (which are defined over ${\mathbb Q}$ as desired), so equations for $D$ over ${\mathbb Q}$ are:
$$D: \left\{ \begin{array}{c}
t_{1}^{2}t_{5}-t_{1}t_{2}t_{7}+t_{2}^{2}t_{4}-4 t_{4}t_{5}+t_{7}^{2}=0\\
t_{1}^{2}t_{6}-t_{1}t_{3}t_{8}+t_{3}^{2}t_{4}-4 t_{4}t_{6}+t_{8}^{2}=0\\
t_{2}^{2}t_{6}-t_{2}t_{3}t_{9}+t_{3}^{2}t_{5}-4 t_{5}t_{6}+t_{9}^{2}=0\\
E(t_{1},\ldots,t_{9})+E(t_{1},\ldots,t_{9})^{\sigma}=0\\
\omega_{3}E(t_{1},\ldots,t_{9})+\omega_{3}^{2}E(t_{1},\ldots,t_{9})^{\sigma}=0\\
F(t_{1},\ldots,t_{9})+F(t_{1},\ldots,t_{9})^{\sigma}=0\\
\omega_{3}F(t_{1},\ldots,t_{9})+\omega_{3}^{2}F(t_{1},\ldots,t_{9})^{\sigma}=0
\end{array}
\right\}.
$$

\section{A remark on the decomposition of $JS$ for the case $m=6$. Group algebra point of view.}\label{GAD}
The isogenous decomposition obtained in Theorem \ref{main} for the Jacobian variety of $S$ was obtained by a simple application of  
Kani-Rosen's decomposition result (Corollary \ref{coroKR}). In this section we shall show how the methods of Lange-Recillas \cite{LR}, Carocca-Rodr\'iguez \cite{CR},  Rojas \cite{RJ} and Jim\'enez \cite{yo} about decompositions of abelian varieties, using the rational algebra of finite groups, can also be applied to obtain the same decomposition of Theorem \ref{main}. We only describe it for $m=6$ in case $\eqref{caso2a}$ as the general case follows the same ideas.

\subsection{The group algebra decomposition for abelian varieties with non-trivial automorphisms}
We start by recalling some definitions and results about the isotypical decomposition of any abelian variety $A$ with a non-trivial (finite) group of automorphisms in terms of the complex and rational irreducible representations of $G$.

Let $V$ be an irreducible representation of $G$ over ${\mathbb C}$. If  we denote by $F$ its field of definition and by $K$ the field obtained by extending ${\mathbb Q}$ by the values of the character $\chi_V$, then $F$ is a finite extension of $K$ and the extension degree $m_V=[F:K]$ is called the Schur index of $V$. For details see \cite{Serre}.

The action of $G$ on $A$ induced a $\mathbb{Q}-$algebra homomorphism $\rho:{\mathbb Q}[G]\to \text{End}_{\mathbb Q}(A)$. For any element $\alpha \in {\mathbb Q}[G]$ we define an abelian subvariety $B_{\alpha} := {\rm Im} (\alpha)=\rho(l\alpha)(A) \subset A$, where $l$ is some positive integer such that $l\alpha \in {\mathbb Z}[G]$. 

The semi-simple algebra ${\mathbb Q}[G]$ decomposes into a product $Q_0 \times \dots \times Q_r$ of simple ${\mathbb Q}-$algebras; the simple algebras $Q_i$ are in bijective correspondence with the rational irreducible representations of $G$. That is, for any rational irreducible representation ${W}_i$ of $G$ there is a uniquely determined central idempotent $e_i$. This idempotent defines an abelian subvariety of $A$, namely $B_i=B_{e_i}$. These varieties, called isotypical components, are uniquely determined by the representation ${W}_i$. Moreover, the decomposition of every $Q_i=L_1\times \dots \times L_{n_i}$ into a product of minimal left ideals (all isomorphic) gives a further decomposition of $A$. More precisely, there are idempotents $f_{i1},\dots, f_{in_i}\in Q_i$ such that $e_i=f_{i1}+\dots +f_{in_i}$ where $n_i=\text{dim} V_i/m_{V_i}$, with $V_i$ the complex irreducible representation associated to ${W}_i$. These idempotents provide subvarieties $B_{ij}:=B_{f_{ij}} \sim B_i$, for all $j$. Then we have the following theorem

\begin{theo} [\cite{LR}, \cite{CR}] \label{teolangerecillas}
Let $G$ be a finite group acting on an abelian variety $A.$ Let $W_1, \ldots, W_r$ denote the irreducible rational representations of $G$. Then there are abelian subvarieties $B_1, \ldots, B_r$ of $A$ and an isogeny 
\begin{equation} \label{lg}
A \sim B_1^{n_1} \times \cdots \times B_r^{n_r}.
\end{equation}
\end{theo}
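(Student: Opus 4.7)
The plan is to exploit the $\mathbb{Q}$-algebra homomorphism $\rho:\mathbb{Q}[G]\to \mathrm{End}_{\mathbb{Q}}(A)$ induced by the $G$-action, and to transport the Wedderburn decomposition of the semisimple group algebra to an isogeny decomposition of $A$. By Maschke's theorem together with Wedderburn's structure theorem one has
\[
\mathbb{Q}[G] \;=\; Q_1 \times \cdots \times Q_r,
\]
where each factor $Q_i$ is a simple $\mathbb{Q}$-algebra and corresponds bijectively to the rational irreducible representation $W_i$. Let $e_i\in \mathbb{Q}[G]$ be the central idempotent projecting onto $Q_i$; then $e_ie_j=0$ for $i\neq j$ and $e_1+\cdots+e_r=1$. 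Choosing a positive integer $l$ with $le_i\in \mathbb{Z}[G]$, the endomorphism $\rho(le_i)$ is a genuine element of $\mathrm{End}(A)$, and I set the isotypical component
\[
B_i \;:=\; \mathrm{Im}\,\rho(le_i)\subset A.
\]
The orthogonality relations and $\sum e_i=1$ then show that the sum map $B_1\times\cdots\times B_r\to A$ has finite kernel, yielding a first-level isogeny $A\sim B_1\times\cdots\times B_r$.

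The second step refines each isotypical piece. Since $Q_i\cong M_{n_i}(D_i)$ for a suitable division $\mathbb{Q}$-algebra $D_i$, it admits a decomposition $Q_i=L_{i,1}\oplus\cdots\oplus L_{i,n_i}$ into mutually isomorphic minimal left ideals; concretely $L_{i,j}=Q_if_{i,j}$ for pairwise orthogonal primitive idempotents $f_{i,1},\ldots,f_{i,n_i}$ satisfying $e_i=f_{i,1}+\cdots+f_{i,n_i}$. Writing $B_{i,j}:=\mathrm{Im}\,\rho(lf_{i,j})$, the orthogonality of the $f_{i,j}$ again gives $B_i\sim B_{i,1}\times\cdots\times B_{i,n_i}$. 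The crucial point is that any $Q_i$-module isomorphism $L_{i,1}\to L_{i,j}$ is realized by right multiplication by some $q\in Q_i$, and after clearing denominators $\rho(q)$ induces a homomorphism $B_{i,1}\to B_{i,j}$ with finite kernel, whose existence in the opposite direction shows $B_{i,j}\sim B_{i,1}$ for every $j$.

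Relabelling $B_{i,1}$ as $B_i$ and combining the two levels yields the asserted isogeny
\[
A \;\sim\; B_1^{n_1}\times\cdots \times B_r^{n_r}.
\]
The main technical obstacle is not conceptual but bookkeeping: one must carefully pass from rational elements of $\mathbb{Q}[G]$ (idempotents, module isomorphisms) to honest endomorphisms of $A$ by systematically clearing denominators, and verify in each case that the resulting morphisms of abelian varieties have finite kernel. Once this translation between the algebraic side of $\mathbb{Q}[G]$ and the geometric side of $A$ is set up cleanly, the Wedderburn decomposition transfers essentially verbatim to the isogeny decomposition of $A$, and the multiplicities $n_i=\dim V_i/m_{V_i}$ appear as the number of simple summands of each isotypical factor $Q_i$.
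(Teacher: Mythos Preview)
Your proposal is correct and follows essentially the same approach as the paper: the paper does not prove this cited result from \cite{LR} and \cite{CR} in full, but the paragraph immediately preceding the theorem sketches exactly the construction you carry out---central idempotents $e_i$ from the Wedderburn decomposition of $\mathbb{Q}[G]$ give the isotypical components, and the primitive idempotents $f_{ij}$ summing to $e_i$ (with $n_i=\dim V_i/m_{V_i}$) give the further refinement into isogenous pieces. Your write-up simply fills in the verification that the sum maps are isogenies and that the $B_{i,j}$ are mutually isogenous, which the paper leaves implicit.
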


\s

The above isogenous decomposition of the abelian variety $A$ is called {\it the group algebra decomposition of $A$}. 

\s
\subsection{The group algebra decomposition for the Jacobian variety of Riemann surfaces}
We next assume that $A=JS$, where $S$ is a closed Riemann surface and $G$ is a (finite) group of conformal automorphisms of it.
In this particular case, in Theorem \ref{teolangerecillas} we always have that one of the factors $B_j$ is isogenous to the Jacobian variety $JS_{G}$, where $S_{G}$ is the subjacent Riemann surface structure associated to the Riemann orbifold $S/G$. In this situation, the isotypical decomposition can be made more explicitly as follows.

Let $H$ be a subgroup of $G.$ We denote by $\pi_H : S \to S_H=S/H$ the associated regular covering map and by $\rho_H$ the representation of $G$ induced by the trivial representation of $H$. If $U$ and $V$ are representations of $G$, then $\langle U,V \rangle$ denotes the usual inner product of the corresponding characters. By the Frobenius Reciprocity Theorem $\langle \rho_H,V\rangle=\text{dim}_{\mathbb C} V^H$, where $V^H$ is the subspace of $V$ fixed by $H$.  Define $p_H=\frac{1}{\vert H\vert}\sum_{h\in H} h$ as the central idempotent in ${\mathbb Q}[H]$; corresponding to the trivial representation of $H$. Also, we define $f_H^{i}$ as $p_H e_{i}$, an idempotent element in ${\mathbb Q}[G]e_{i}$.

With the previous notations, the corresponding group algebra decomposition of $JS_H$ is given as follows \cite[Proposition 5.2]{CR}:
\begin{equation}\label{carocca-rodriguez}
J S_H \sim J S_G\times B_{1}^{\frac{\text{dim} V_1^H}{m_1}}\times \dots \times B_{r}^{\frac{\text{dim} V_r^H}{m_r}},
\end{equation}
whit $m_i=m_{V_i}$. Moreover, 
\begin{equation}\label{phjxh}
{\rm Im} (p_H)=\pi_H^*(J_H)
\end{equation}
where $\pi_H^*(J S_H)$ is the pullback of $J S_H$ by $\pi_H$. If $\text{dim} V_i^H\neq 0$ then
\begin{equation}\label{fh}
{\rm Im}(f_H^i)=B_{i}^{\frac{\text{dim} V_i^H}{m_{i}}}.
\end{equation}

\s

We should notice that the previous results do not depend on the action of $G.$ The next result related to the dimension of the factors in (\ref{lg}) involves the way the group $G$ acts.

\begin{theo}{\cite{RJ}}
Let $G$ be a finite group acting on a compact Riemann surface $S$ with geometric signature given by $(\gamma; [m_1,C_1],\ldots,[m_r,C_r])$. Then the dimension of factor $B_i$ associated to a non trivial rational irreducible representation $W_i$ in (\ref{lg}) is given by

$$\dim B_i=k_i(\dim V_i (\gamma-1)+\frac{1}{2} \sum_{k=1}^r ( \dim V_i -\dim (V_i^{G_k}))$$where $G_k$ is a representative of the conjugacy class of $C_k$ and $k_i = m_i |\mbox{Gal}(K_i/\mathbb{Q}) |$.
\end{theo}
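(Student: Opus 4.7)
The plan is to reduce the computation of $\dim B_i$ to a character-theoretic calculation on the space of holomorphic differentials, carried out via the Chevalley--Weil (Eichler trace) formula, and then account for the passage from complex to rational irreducible representations. Since $H^1(S,\mathbb{C}) \cong H^0(S,\Omega^1_S)\oplus \overline{H^0(S,\Omega^1_S)}$ as complex $G$-modules, and the isotypical components of $JS$ correspond to those of $H^1(S,\mathbb{Q})$ under the $\mathbb{Q}[G]$-action $\rho$, the dimension of $B_i$ is controlled by the multiplicity of the complex irreducible $V_i$ in the $G$-module $H^0(S,\Omega^1_S)$.

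First, I would invoke the Chevalley--Weil formula for the character $\chi_\Omega$ of $G$ acting on $H^0(S,\Omega^1_S)$. Given the geometric signature $(\gamma;[m_1,C_1],\ldots,[m_r,C_r])$, the formula expresses
\[
\langle \chi_\Omega,\chi_{V_i}\rangle \;=\; (\gamma-1)\dim V_i \;+\; \sum_{k=1}^r N_k(V_i),
\]
where the local contribution $N_k(V_i)$ at a branch point with cyclic stabilizer $G_k=\langle h_k\rangle$ depends only on the restriction $V_i|_{G_k}$ and the rotation number of $h_k$ on the cotangent space. The next step is the key combinatorial identity: decomposing $V_i|_{G_k}$ into characters of $G_k$ and pairing the eigenvalue $\zeta$ with its conjugate $\bar\zeta$ (corresponding to the holomorphic and anti-holomorphic tangent directions) yields exactly
\[
N_k(V_i) \;=\; \tfrac{1}{2}\bigl(\dim V_i - \dim V_i^{G_k}\bigr).
\]
Summing over $k$ gives the bracketed expression in the statement.

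Finally, I would pass from the complex irreducible $V_i$ to the rational irreducible $W_i$. Over $\mathbb{Q}$, the simple factor $Q_i \subset \mathbb{Q}[G]$ associated to $W_i$ has the form $M_{n_i}(D_i)$ with $D_i$ a division algebra whose center is the character field $K_i$ and of index $m_i$. Consequently, $W_i \otimes_{\mathbb{Q}} \mathbb{C}$ is the direct sum over the Galois orbit $\{V_i^\sigma : \sigma\in\mathrm{Gal}(K_i/\mathbb{Q})\}$, each appearing with multiplicity $m_i$; and all Galois conjugates of $V_i$ occur in $\chi_\Omega$ with the same multiplicity since $\chi_\Omega$ is rational-valued. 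Combining these, $\dim B_i = k_i \cdot \langle \chi_\Omega, \chi_{V_i}\rangle$ with $k_i=m_i|\mathrm{Gal}(K_i/\mathbb{Q})|$, yielding the claimed formula. The main obstacle is establishing the clean symmetric form $\tfrac{1}{2}(\dim V_i-\dim V_i^{G_k})$ for the local contribution: the raw Chevalley--Weil expression involves a sum over nontrivial eigenvalues weighted by their rotation numbers, and reducing this to the stated form requires both the conjugate-pairing argument above and the observation that contributions from the trivial isotypic part of $V_i|_{G_k}$ cancel against the $-\dim V_i^{G_k}$ correction.
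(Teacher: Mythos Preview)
The paper does not prove this statement; it is quoted from Rojas \cite{RJ} as an input to the group-algebra decomposition machinery, so there is no proof in the paper to compare against.

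Your overall strategy---compute multiplicities via the Chevalley--Weil/Eichler trace formula and then pass from complex to rational irreducibles---is the standard and correct route, and the final bookkeeping with $k_i=m_i\,|\mathrm{Gal}(K_i/\mathbb{Q})|$ and $n_i=\dim V_i/m_i$ is right. But two of your intermediate claims are false as written, and they are not mere notational slips because the second one is used independently.

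First, the identity $N_k(V_i)=\tfrac{1}{2}(\dim V_i-\dim V_i^{G_k})$ does \emph{not} hold for the local contribution to $\langle \chi_\Omega,\chi_{V_i}\rangle$, where $\chi_\Omega$ is the character of $H^0(S,\Omega^1_S)$. For a single complex irreducible $V_i$ the Chevalley--Weil local term depends on the specific eigenvalues of the stabilizer generator, not merely on $\dim V_i^{G_k}$: for instance, if $G_k\cong\mathbb{Z}/3$ and $V_i|_{G_k}$ is a nontrivial character, the local contribution is $1/3$ or $2/3$, never $1/2$. What \emph{is} true is that the local contribution to
\[
\langle\chi_{V_i},H^1(S,\mathbb{C})\rangle=\langle\chi_{V_i},H^0(\Omega)\rangle+\langle\chi_{\overline{V_i}},H^0(\Omega)\rangle
\]
equals $\dim V_i-\dim V_i^{G_k}$, since the fractional weights $\alpha/m_k$ and $(m_k-\alpha)/m_k$ coming from $V_i$ and $\overline{V_i}$ pair to $1$ for each nontrivial eigenvalue. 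Your ``pairing $\zeta$ with $\bar\zeta$'' heuristic is exactly this computation, but it lives on $H^1$, not on $H^0(\Omega)$.

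Second, and relatedly, $\chi_\Omega$ is \emph{not} rational-valued in general, so you cannot invoke it to conclude that all Galois conjugates of $V_i$ appear with equal multiplicity in $H^0(\Omega)$. The rational-valued character is that of $H^1(S,\mathbb{Q})$. The clean fix is to run the entire argument on $H^1$: one has $2\dim_{\mathbb{C}} B_i^{n_i}=\dim_{\mathbb{C}}\bigl(e_i\cdot H^1(S,\mathbb{C})\bigr)$, the right-hand side equals $|\mathrm{Gal}(K_i/\mathbb{Q})|\cdot\langle\chi_{V_i},H^1\rangle\cdot\dim V_i$ by rationality of $\chi_{H^1}$, and dividing by $2n_i=2\dim V_i/m_i$ yields the stated formula.
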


\s

The following lemma gives us conditions under which a factor in the group algebra decomposition can be described as the image of a concrete idempotent, in particular, when it corresponds to a Jacobian of an intermediate quotient.

\begin{lemm}\cite{yo}\label{metodo}
Let $S$ be a Riemann surface with an action of a finite group $G$ such that the genus of $S/G$ is equal to zero.  Assume that $V_{1}$, \ldots, $V_{q}$ are the non-isomorphic complex irreducible representations of $G$. Let us consider the group algebra decomposition of $JS$ given by (\ref{lg}). Let $H$ be a subgroup of $G$ so that $\dim_{\mathbb C} V_i^{H}=m_i$, for some fixed index $i$. Then
\begin{itemize}
\item[(i)]  ${\rm Im} (f_H^i)=B_{i}$;
\item[(ii)] if, moreover, $\dim_{\mathbb C} V_l^{H} = 0$ for all $l$, $l\neq i$, 
such that $\dim_{\mathbb C} B_l\neq 0$ then 
$$J S_H \sim {\rm Im} (p_H)=B_{i}.$$
\end{itemize}
\end{lemm}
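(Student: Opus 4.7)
The plan is to derive both parts by combining the three structural facts recalled just above the lemma: the group algebra decomposition (\ref{lg}) for $JS$, the induced decomposition (\ref{carocca-rodriguez}) for $JS_H$, and the identifications (\ref{phjxh}) and (\ref{fh}) of the images of the idempotents $p_H$ and $f_H^i$. Since the genus of $S/G$ is zero, the factor $JS_G$ in (\ref{carocca-rodriguez}) is trivial, which is the key simplification we will exploit.

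For part (i), I would simply invoke (\ref{fh}). Under the hypothesis $\dim_{\mathbb{C}} V_i^H = m_i$, the exponent $\dim_{\mathbb{C}} V_i^H / m_i$ appearing there equals $1$, so
\[
{\rm Im}(f_H^i) = B_i^{\dim V_i^H / m_i} = B_i,
\]
which is the claim.

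For part (ii), I would start from (\ref{carocca-rodriguez}), which reads
\[
JS_H \sim JS_G \times B_1^{\dim V_1^H / m_1} \times \cdots \times B_r^{\dim V_r^H / m_r}.
\]
Since $g(S/G)=0$ the factor $JS_G$ is trivial. For each index $l \neq i$, either $\dim_{\mathbb{C}} B_l = 0$, in which case that factor contributes nothing, or $\dim_{\mathbb{C}} B_l \neq 0$ and, by hypothesis, $\dim_{\mathbb{C}} V_l^H = 0$, so that factor is again trivial. Combined with the exponent $1$ on $B_i$ computed in part (i), this yields $JS_H \sim B_i$. Finally, (\ref{phjxh}) identifies ${\rm Im}(p_H)$ with $\pi_H^*(JS_H)$, and since $\pi_H : S \to S_H$ is a finite regular covering the pullback $\pi_H^*$ is an isogeny onto its image; hence $JS_H \sim {\rm Im}(p_H)$, and stringing these isogenies together gives $JS_H \sim {\rm Im}(p_H) = B_i$.

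There is no real obstacle here beyond careful bookkeeping: the statement is essentially a clean specialization of the Carocca--Rodr\'iguez formula when the ambient quotient is rational and a single isotypical component absorbs all of $JS_H$. The only point worth emphasizing in the write-up is the separate treatment of the $l \neq i$ indices according to whether $\dim_{\mathbb{C}} B_l$ is zero or not, which is why the hypothesis in (ii) is phrased only for those $l$ with $B_l$ nontrivial.
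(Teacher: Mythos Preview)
The paper does not supply its own proof of this lemma: it is quoted from \cite{yo} and used as a black box. Your derivation from formulas (\ref{carocca-rodriguez}), (\ref{phjxh}) and (\ref{fh}) is exactly the intended argument and is correct.

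One small refinement: as written, your chain of isogenies yields $JS_H \sim {\rm Im}(p_H) \sim B_i$, but the lemma asserts the literal equality ${\rm Im}(p_H)=B_i$ as subvarieties of $JS$. To get this, note that the $f_H^l=p_H e_l$ are pairwise orthogonal idempotents summing to $p_H$, so ${\rm Im}(p_H)=\sum_l {\rm Im}(f_H^l)$ inside $JS$; under the hypotheses of (ii) every summand with $l\neq i$ vanishes (either $B_l=0$ or $\dim V_l^H=0$), while ${\rm Im}(f_H^i)=B_i$ by part (i), giving ${\rm Im}(p_H)=B_i$ on the nose.
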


\s

On this way, in order to obtain factors isogenous to Jacobian varieties at the group algebra decomposition of $JS$, we need to look for subgroups $H$ of $G$ satisfying $\dim_{\mathbb C} V_i^{H}=\langle \rho_H ,V_i\rangle=m_i$ and $\dim_{\mathbb C} V_l^{H} = 0$ for all $l$, $l\neq i$, such that $\dim_{\mathbb C} B_l\neq 0$.

\subsection{Our examples}\label{iso}
In our case, the group $$G=\langle a,t: a^{2}=t^{6}=[a,t]^{2}=1, t^{3}=(at)^{3}\rangle$$ has eight complex irreducible representations $V_1, \ldots, V_8$, as shown in the following character table.
\s

\begin{center}
\begin{tabular}{ | c |  c | c |c |c |c |c |c | c | }
\hline Conj. class     & $\mbox{id}$ & $at^3$ & $a$ & $t^3$& $t^2$& $t^5$& $t^4$& $t$\\
\hline $V_1$ & 1 & 1 & 1& 1& 1& 1& 1& 1 \\
\hline $V_2$ & 1 & -1 & 1& -1& 1& -1& 1& -1 \\
\hline $V_3$ & 1 & -1 & 1& -1& $\xi^2$ & $-\xi^2$ & $\xi$ & $-\xi$ \\
\hline $V_4$ & 1 & -1 & 1& -1& $\xi$ & $-\xi$ & $\xi^2$ & $-\xi^2$ \\
\hline $V_5$ & 1 & 1 & 1& 1& $\xi^2$ & $\xi^2$ & $\xi$ & $\xi$ \\
\hline $V_6$ & 1 & 1 & 1& 1& $\xi$ & $\xi$ & $\xi^2$ & $\xi^2$ \\
\hline $V_7$ & 3 & 1 & -1& -3& 0& 0& 0& 0 \\
\hline $V_8$ & 3 & -1 & -1& 3& 0& 0& 0& 0 \\
\hline
\end{tabular}
\end{center}where $\xi = \mbox{exp}(2 \pi i / 3).$ 

It is not difficult to see that the rational irreducible representations of $G$ are $$W_1:=V_1, W_2:=V_2, W_3:=V_3 \oplus V_4, W_4:=V_5 \oplus V_6, W_5:= V_7, W_6:=V_8. $$

By applying Theorem \ref{teolangerecillas} we obtain 
$$JS \sim B_1^1 \times B_2^1 \times B_3^1 \times B_4^1 \times B_5^{3} \times B_6^{3}.$$ 

Moreover, as $B_1 \sim JS_G$ (and $S/G$ has genus zero), $B_1=0.$ Finally, with the help of a computational program such as MAGMA \cite{magma} we can obtain that $\mbox{dim}(B_2)=\mbox{dim}(B_3)=\mbox{dim}(B_4)=\mbox{dim}(B_6)=0.$ Combining this fact with the previous isogenies, we are in position to conclude that $JS \sim B_5^3.$

Then now we are looking for every (conjugacy class of) subgroup $H$ of $G$ satisfying $\dim_{\mathbb C} V_7^{H}=\langle \rho_H,V_7\rangle=1$. Remember that $W_5=V_7$. Hence, again using MAGMA we obtain the table of induced representation by any $H\subseteq G$. Therefore, the class of subgroups $H$ satisfying this is which given by $H=\langle a\rangle$. Thus 
$$JS\sim B_5^3\sim (JS_{\langle a\rangle})^3.$$
We obtain that $\mbox{dim}(B_5)=1;$ thus $JS$ in this case is completely decomposable (see also Section \ref{isogenia}).


\bigskip


\begin{thebibliography}{99}

\bibitem{Belyi}
G. V. Bely\u{\i}. 
\emph{On Galois extensions of a maximal cyclotomic field}. 
Izv. Akad. Nauk SSSR Ser. Mat. {\bf 43} (1979), 269-276 (in Russian); 
Math. USSR Izv. {\bf 14} (1980), 247--256 (in English).

\bibitem{CGHR}
A. Carocca, V. Gonz\'alez-Aguilera, R. A. Hidalgo and R. E. Rodr\'{\i}guez.
Generalized Humbert Curves.
{\it Israel Journal of Mathematics} {\bf 164}, No. 1 (2008), 165--192.

\bibitem{CR}
A. Carocca and R. E. Rodr\'iguez.  Jacobians with group actions and rational idempotents. {\it J. Algebra} {\bf 306}, no. 2 (2006), 322-343. 

\bibitem{CJSW}
M. Conder, G. Jones, M. Streit and J. Wolfart. 
Galois actions on regular dessins of small genera. 
{\it Rev. Mat. Iberoam.} {\bf  29} No. 1 (2013), 163--181

\bibitem{serre} T. Ekedhal and J.P.Serre. Examples des corbes alg\'ebriques \`a jacobienne compl\`etement d\'ecomposable
\textit{C.R. Acad. Paris S\'er. I Math.} \textbf{317} (1993), 509-513.

\bibitem{GHL}
G. Gonz\'alez-Diez, R. A. Hidalgo and M. Leyton.
Generalized Fermat Curves.
{\it Journal of Algebra} {\bf 321} (2009), 1643--1660.

\bibitem{Gabino-Andrei}
G. Gonz\'alez-Diez and A. Jaikin-Zapirain.
The absolute Galois group acts faithfully on regular dessins and on Beauville surfaces.
{\it Proc. London Math. Soc.} (2015)
doi: 10.1112/plms/pdv041
First published online: September 7, 2015.

\bibitem{Grothendieck}
A. Grothendieck.
Esquisse d'un Programme. 
{\it Geometric Galois Actions. 1. Around Grothendieck�s Esquisse d'un Programme}.
Ed. L. Schneps and P. Lochak (London Math. Soc. Lecture Note Ser. {\bf 242}, Cambridge University Press, 1997), 5--48.
       
\bibitem{H1}
R. A. Hidalgo.
\emph{Homology closed Riemann surfaces}.
Quarterly Journal of Math. {\bf 63} (2012), 931--952.

\bibitem{H2}
R. A. Hidalgo.
Almost abelian regular desssins d'enfants.
{\it Fundamenta Mathematicae} {\bf 222} (2013), 269--278.

\bibitem{HR}
R. A. Hidalgo and S. Reyes-Carocca.
A constructive proof of Weil's Galois descent theorem.\\
http://arxiv.org/pdf/1203.6294v5.pdf


\bibitem{yo} 
L. Jim\'enez.
On the kernel of the group algebra decomposition of a Jacobian variety.
{\it Rev. Real Academia de Ciencias, Fisicas y Naturales. Serie A Matem\'aticas. RACSAM.} {\bf 110} Issue 1 (2016), 185 - 199.

\bibitem{K-R}
E. Kani and M. Rosen.
Idempotent relations and factors of Jacobians.
{\it Math. Ann.} {\bf 284} No. 2 (1989), 307--327.

\bibitem{LR}
H. Lange and S. Recillas. Abelian varieties with group action. {\it J. Reine Angew. Math.} {\bf 575} (2004), 135-155. 

\bibitem{magma} W. Bosma, J. Cannon and C. Playoust. The Magma algebra system. I. The user language.
\textit{J. Symb. Comp. Math.} \textbf{24} 3-4 (1997), 235--265. http://magma.maths.usyd.edu.au

\bibitem{paulhus2} J. Paulhus. Elliptic factors in Jacobians of hyperelliptic curves with certain automorphism groups.
\textit{Proceedings of the Tenth Algorithmic Number Theory Symposium.} (2013), 487-505

\bibitem{RJ}
A. M. Rojas. Group actions on Jacobian varieties. {\it Rev. Mat. Iber.} {\bf 23} (2007), 397-420.


\bibitem{Serre}
J. P. Serre. Linear Representations of Finite Groups. {\it Graduate Texts in Maths} {\bf 42}.


\bibitem{Singerman}
D. Singerman.
Finitely maximal Fuchsian groups.
{\it J. London Math. Soc.} {\bf 6} No. 2 (1972), 29--38.

\bibitem{SW}
M. Streit and J. Wolfart.
Characters and Galois invariants of regular dessins.
{\it Revista Matematica Complutense} {\bf 13} No. 1 (2000), 49--81.

\bibitem{Wolfart}
J. Wolfart.
\emph{$ABC$ for polynomials, dessins d'enfants and uniformization---a survey}. Elementare und analytische Zahlentheorie, 313--345, Schr. Wiss. Ges. Johann Wolfgang Goethe Univ. Frankfurt am Main, {\bf 20}, Franz Steiner Verlag Stuttgart, Stuttgart (2006).

\end{thebibliography}
\end{document}